\newcommand{\inn}{\textrm{ in }}
\newcommand{\onn}{\textrm{ on }}
\newcommand{\nn}{\mathbf n}
\newcommand{\dimension}{d}
\newcommand{\lno}{\left\|}
\newcommand{\rno}{\right\|}
\newcommand{\lsn}{\left|}
\newcommand{\rsn}{\right|}
\newcommand{\alphab}{{\boldsymbol \alpha}}
\newcommand{\etab}{{\boldsymbol \eta}}
\newcommand{\xib}{{\boldsymbol \xi}}
\newcommand{\zetab}{{\boldsymbol \zeta}}
\newcommand{\xb}{{\mathbf x}}
\newcommand{\yb}{{\mathbf y}}
\newcommand{\Tb}{\mathbf T}
\newcommand{\Rb}{\mathbf R}
\newcommand{\EEE}{{\mathcal E}}
\newcommand{\KKK}{{\mathcal K}}
\newcommand{\GGG}{{\mathcal G}}
\newcommand{\HHH}{{\mathcal H}}
\newcommand{\RRR}{{\mathcal R}}
\newcommand{\JJJ}{{\mathcal J}}
\newcommand{\VVV}{{\mathbf V}}
\newcommand{\WWW}{{\mathbf W}}
\newcommand{\MMM}{{\mathcal M}}
\newcommand{\lp}{\left(}
\newcommand{\rp}{\right)}
\newcommand{\trino}{{|\hspace{-1pt} | \hspace{-1pt}|}}
\newtheorem{proposition}{Proposition}
\newtheorem{theorem}{Theorem}
\newtheorem{remark}{Remark}
\newtheorem{lemma}{Lemma}
\title[Method of Virtual Interfaces]{An Optimally Convergent Coupling Approach for Interface Problems Approximated with Higher--Order Finite Elements}
\author{James Cheung, Mauro Perego, Pavel Bochev, and  Max Gunzburger}
\begin{document}
\maketitle
%
%   ABSTRACT
%
\begin{abstract}
In this paper, we present a new numerical method for determining an optimally convergent numerical solution of interface problems on polytopial meshes. ``Extended'' interface conditions are enforced in the sense of a Dirichlet--Neumann coupling by means of a pullback onto the discrete interfaces. This coupling approach serves to bypass geometric variational crimes incurred by the classical finite element method. Further, the primary strength of this approach is that it  does not require that the discrete interfaces are geometrically matching to obtain optimal convergence rates. Our analysis indicates that this approach is well--posed and optimally convergent in $H^1$. Numerical experiments indicate that optimal broken $H^1$ and $L^2$ convergence is achieved. 
\end{abstract}
%
%   Introduction
%
\section{Introduction}
Higher order finite element methods are attractive since they bring the prospect of faster converging numerical solutions for a lower computational cost. However, in many practical situations, higher order elements (i.e. elements with polynomial order of $2$ or greater) are not useful since the geometric approximation error of the polytopial mesh tends to dominate the best approximation error of the inherent polynomial approximation \cite[Chapter 4]{strang1973analysis}. As such, practical finite element computations are often performed using only piecewise linear or stabilized first order elements. Interface problems pose an additional difficulty since separate mesh approximation of the constituent subdomains may lead to geometrically nonmatching approximations to the interface. This commonly occurs when complicated domains must be meshed and also in cases where two different numerical codes must be merged together to compute the behavior of a coupled system, as it is often done for fluid--structure interaction problems. 

The most commonly utilized approach to overcome the issue of geometric non-coincidence is to incorporate \emph{transfer operators} to transfer values from one polytopial interface approximation to another \cite{de2007review}. These operators are used in instances of the Dirichlet--Neumann coupling method and mortar element methods \cite{flemisch2005new, flemisch2005mortar} for bridging together disjoint subdomain solutions. While these methods are simple and efficient, they suffer in the fact that the accuracy of their numerical solutions tend to be capped at second order in $L^2$ due to the geometric errors described in the previous paragraph. 

As in the case of simple boundary value problems, curvilinear maps can be used to better fit the discrete interface approximation to the interface given by the continuous problem. In \cite{bfer1985isoparametric} the isoparametric finite element method was generalized to the interface problem setting. Additionally, in \cite{bazilevs2006isogeometric}, the isogeometric analysis was applied to arterial blood flow. While these methods can provide higher--order numerical solutions, they can be restrictive in terms of computational cost since, in both cases, higher order quadrature rules must be utilized since the basis functions are no longer simple polynomials. In addition to the additional computational expense, methods based on curvilinear mappings can be laborious to implement. 

A notable method presented in \cite{qiu2016high} utilizes a similar idea to what is presented in this paper. In the approach presented there, optimal convergence rates are achieved by applying the high order finite element method presented in \cite{cockburn2014solving} to the interface problem setting, where the main idea of the approach is to utilize line integrals to optimally transfer data from the continuous interface onto its discrete approximations. While this approach allows for higher order numerical approximations, its biggest challenge lies in the fact that this approach is for mixed formulations of elliptic problems. Another similar method described in \cite{kuberry2017optimization} utilizes an optimization based approach to couple the extensions numerical solutions together onto a common refinement mesh generated from the vertices of the nonmatching interface approximations. In this approach, second order accuracy has been observed with linear elements. 

The purpose of this paper is to present a new numerical method for computing higher order numerical solutions for interface problems for cases when the polytopial interface approximations are not necessarily geometrically matching. This is done by extending the polynomial extension finite element method \cite{PE-FEM} to this setting by enforcing that the extension of the numerical solution and its extended co--normal derivatives are approximately weakly continuous on the interface given by the continuous problem. Since the continuous interface does not coincide with the geometry of the discrete problem, this matching condition is enforced by means of a pullback, via auxiliary variables, onto the discrete interface approximations. Because this method is based on affine--equivalent finite element approaches, it's implementation is relatively simple and its computational expense is comparable to that of classical finite element methods. We are able to demonstrate stability and optimal broken $H^1$ convergence theoretically, and optimal broken $L^2$ and $H^1$ convergence through a numerical example. 

The stucture of the paper is as follows: In \S \ref{section: Preliminaries}, we discuss the preliminary material required for this work. In \S \ref{section: Problem Setting}, we describe the elliptic interface problem and our numerical method. In \S \ref{section: Analysis} we state and prove our well--posedness theorem and error estimates. In \S\ref{section: Numerical Illustration} we provide a numerical illustration to vindicate the results of our analysis. And finally, in \S\ref{section: Concluding Remarks} we provide concluding remarks.
%
%   Preliminaries
%
\section{Preliminaries} \label{section: Preliminaries}
In this section, we will discuss the preliminary notions required for this paper.
\subsection{Geometric Notions}
%   Define continuous domain
Let $k=2,3, \ldots$ and $\Omega_i \subset \mathbb R^\dimension$, where $\dimension = 2,3$ and $i=1,2$, denote bounded open subdomains having a $C^{k+1}$-- smooth boundary $\Gamma_i$ with an associated unit outer normal vector field $\nn_i$. We will assume that $\Gamma_1$ and $\Gamma_2$ intersect such that $\Omega_1 \cap \Omega_2 = \emptyset$ and $\lsn \Gamma^c \rsn > 0$, where $\Gamma^c := \Gamma_1 \cap \Gamma_2$ is the interface between the two subdomains $\Omega_1$ and $\Omega_2$. We will then denote $\Gamma^0_i:= \Gamma_i \setminus \Gamma^c$. 

%   Define discrete domain
We now define our discrete geometry. We will define $\Omega_{h,i}$ as the discrete polytopial approximation of $\Omega_i$ that arises from meshing. For simplicity, we shall also let $\Omega_{h,i}$ be the mesh triangulation, where $\KKK_{h,i}$ denotes an arbitrary simplicial element that belongs to $\Omega_{h,i}$ with meshsize $h_i:= \max_{\KKK_{h,i} \subset \Omega_h} \textrm{diam } \lp\KKK_{h,i}\rp$. Additionally, we will denote $h = \max \left\{h_1, h_2 \right\}$. Further, we will denote $\Gamma_{h,i}:= \partial \Omega_{h,i}$ and $\Gamma^c_{h,i}$ and $\Gamma^0_{h,i}$ as the subsets of $\Gamma_{h,i}$ that approximate $\Gamma^c_{h,i}$ and $\Gamma^0_{h,i}$ respectively. Further, we denote $\nn_{h,i}$ as the outer unit normal vector field associated with $\Gamma_{h,i}$. We remark that in the setting we consider in this paper, $\Gamma^c_{h,1}$ does not necessarily coincide with $\Gamma^c_{h,2}$. We assume however that all vertices of $\Gamma^c_{h,i}$ belong also to $\Gamma^c$. We shall denote $\EEE^j_{h,i}$ as an edge of $\Gamma_{h,i}$, where $j$ is an index variable. Associated with $\EEE^j_{h,i}$ is the element $\KKK^j_{h,i}$, where $\EEE^j_{h,i}$ belongs on the edge of $\KKK^j_{h,i}$.

%Illustrate the domains
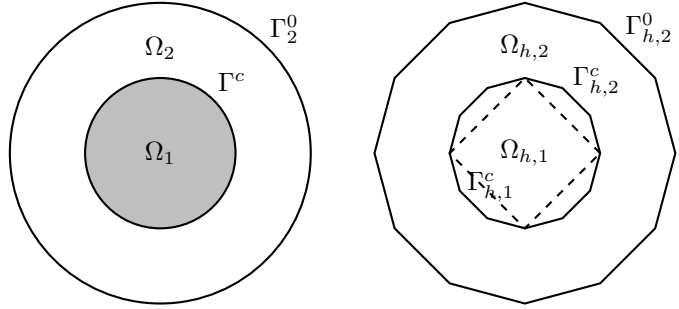
\begin{figure}[h!]
\begin{center}
\begin{tikzpicture}
	\filldraw[fill = white, thick] (0,0) circle(2);
	\filldraw[fill = lightgray, thick] (0,0) circle(1);
	\filldraw[black] (0,0) node[anchor=center]{$\Omega_1$};
	\filldraw[black] (0,1.4) node[anchor=center]{$\Omega_2$};
	\filldraw[black] (0.95,0.95) node[anchor=center]{$\Gamma^c$};
	\filldraw[black] (1.65, 1.65) node[anchor=center]{$\Gamma^0_2$};
\end{tikzpicture}
\qquad
\begin{tikzpicture}
\draw[black, thick] (0:2) \foreach \x in {30,60,...,359} {-- (\x:2)} -- cycle (90:1);
\draw[black, thick] (0:1) \foreach \x in {30,60,...,359} {-- (\x:1)} -- cycle (90:1);
\draw[black, dashed, thick] (0:1) \foreach \x in {0, 90, ..., 360} {--(\x:1)} -- cycle (90:1);
	\filldraw[black] (0,0) node[anchor=center]{$\Omega_{h,1}$};
	\filldraw[black] (0,1.4) node[anchor=center]{$\Omega_{h,2}$};
	\filldraw[black] (0.95,0.95) node[anchor=center]{$\Gamma^c_{h,2}$};
	\filldraw[black] (1.65, 1.65) node[anchor=center]{$\Gamma^0_{h,2}$};
	\filldraw[black] (-0.45, -0.45) node[anchor=center]{$\Gamma^c_{h,1}$};
\end{tikzpicture}
\end{center}
\caption{Left: Example of a continuous domain configuration. Right: Example of a discrete domain approximation.}
\end{figure}

%   Define the mapping operators.
Because we have assumed that $\Gamma_i$ is $C^{k+1}$--smooth, we have by the implicit function theorem that there exists, for every $\EEE^j_{h,i} \subset \Gamma_{h,i}$  a $C^{k+1}$ continuous mapping $\etab^j_i: \EEE^j_{h,i}\rightarrow \Gamma_{i}$. We can define these mappings such that $\etab^j_i: \EEE^j_{h,i}\rightarrow \Gamma_{h,i}$ such that $\bigcup_{j} \etab^j_i(\xib^j_i) = \Gamma^c$ and $\bigcap_j \etab^j(\xib^j_i) = \emptyset$, where $\xib^j_i$ is an arbitrary point of $\EEE^j_i$. Of course, since $\Gamma_{i}$ is sufficiently smooth, there exists a unique inverse for each $\etab^j_i$ for we will denote as $\zetab^j_i$. For the sake of notational convenience, we will denote $\etab_i: \Gamma_{h,i} \rightarrow \Gamma_i$ as the formal sum of all mappings $\etab^j_i$, and likewise $\zetab_i: \Gamma^c \rightarrow \Gamma^c_{h,i}$ as the formal sum of all $\zetab^j_i$. Additionally, we will denote $\xib_i$ as a point in $\Gamma_{h,i}$ and $\etab$ as a point in $\Gamma_i$. Further, we will define $\xib_i^c$ and $\xib_i^0$ as points in $\Gamma^c_{h,i}$ and $\Gamma^0_{h,i}$ respectively and $\etab^c$ as a point of $\Gamma^c$. Following this convention, we will then denote $\etab^c_i$ as the formal sum of $\etab^j_i$ that maps $\Gamma^c_{h,i}$ to $\Gamma^c$ and $\zetab^c$ as its inverse. Finally we will denote $\etab^0_i$ as the formal sum of $\etab^j_i$ that maps $\Gamma^0_{h,i}$ to $\Gamma^0_i$ and $\zetab^0_i$ as its inverse. 

We may now represent $\xib_1$ in terms of $\xib_2$ and vice versa by virtue of the invertibility of $\etab^j_i$. This idea is illustrated in the following schematic:
\begin{equation*}
\begin{aligned}
	\xib_{1,(2)}^c: \xib_1^c \overset{\etab^c_1}\rightarrow \etab^c \overset{\zetab^c_2}\rightarrow \xib^c_2\\
	\xib_{2, (1)}^c: \xib^c_2\overset{\etab^c_2}\rightarrow \etab^c \overset{\zetab^c_1}\rightarrow \xib^c_1,
\end{aligned}
\end{equation*}
where we have denoted $\xib^c_{1,(2)}:= \xib^c_1(\xib^c_2)$ and $\xib^c_{2, (1)}:=\xib^c_2(\xib^c_1)$. For simplicity, we will denote the pullback of a variable onto $\Gamma^c_{h,i}$ as $\mu^{(i)}$, e.g., if $\mu := \mu \lp \xib^c_1 \rp$ then $\mu^{(1)} := \mu\lp \xib^c_1 \rp$ and $\mu^{(2)}:= \mu \lp \xib^c_{1, (2)} \rp$.

As a final remark for this subsection, we have that since $\Gamma_{h,i}$ can be seen as a piecewise linear interpolant of $\Gamma_i$, we have that 
\begin{equation} \label{distance assumption}
	\lsn \etab_i(\xib_i) - \xib_i \rsn < \delta_{h,i} = \mathcal O(h_i^2),
\end{equation}
where $\delta_{h,i} \in \mathbb R^+$, and in addition, from a simple computation, we the following
\begin{proposition}
Let $\JJJ^c_{1, \lp 2 \rp}$ denote the Jacobian of the transformation $\Gamma^c_{h,1}\rightarrow \Gamma^c_{h,2}$, then the following bound is satisfied
\begin{equation} \label{eqn: Jacobian Bound}
	\lno \JJJ^c_{1,\lp2\rp} - 1\rno_{C^0\lp\overline{\Gamma^c_{h,2}} \rp} \leq C\lp h_1 + h_2 \rp \lno\etab_2 \rno_{C^2\lp\overline{\Gamma^c_{h,2}} \rp}.
\end{equation}
\end{proposition}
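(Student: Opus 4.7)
The plan is to factor the transformation $\Gamma^c_{h,1} \to \Gamma^c_{h,2}$ through the smooth interface: at a point $\xib^c_2 \in \Gamma^c_{h,2}$, the corresponding point on $\Gamma^c_{h,1}$ is reached by first lifting via $\etab^c_2$ to $\Gamma^c$ and then pulling back via $\zetab^c_1$. The chain rule for surface Jacobians then gives
\begin{equation*}
\JJJ^c_{1,(2)}(\xib^c_2) = \frac{\JJJ_{\etab^c_2}(\xib^c_2)}{\JJJ_{\etab^c_1}\lp\zetab^c_1(\etab^c_2(\xib^c_2))\rp},
\end{equation*}
reducing the task to bounding each one-sided lift Jacobian $\JJJ_{\etab^c_i}$ separately on its own discrete interface.

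On each edge $\EEE^j_{h,i}$ I would parameterize linearly and use the assumption that all vertices of $\EEE^j_{h,i}$ lie on $\Gamma^c$. A Taylor expansion of $\etab^j_i$ at any such vertex, combined with (\ref{distance assumption}), shows that the pushforward of the unit tangent (and, in three dimensions, the induced first fundamental form) of $\EEE^j_{h,i}$ differs from that of its image in $\Gamma^c$ by a term of order $h_i \lno \etab_i \rno_{C^2(\overline{\Gamma^c_{h,i}})}$. Taking the square root of the determinant of the resulting metric produces the edgewise estimate
\begin{equation*}
\lno \JJJ_{\etab^c_i} - 1 \rno_{C^0(\overline{\EEE^j_{h,i}})} \leq C h_i \lno \etab_i \rno_{C^2(\overline{\Gamma^c_{h,i}})},
\end{equation*}
which is uniform in $j$ and hence gives the same bound on all of $\overline{\Gamma^c_{h,i}}$.

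To combine the two one-sided estimates I would write
\begin{equation*}
\JJJ^c_{1,(2)} - 1 = \frac{(\JJJ_{\etab^c_2} - 1) - (\JJJ_{\etab^c_1}\circ \zetab^c_1\circ \etab^c_2 - 1)}{\JJJ_{\etab^c_1}\circ \zetab^c_1\circ \etab^c_2}.
\end{equation*}
For $h$ sufficiently small the denominator is uniformly bounded below by, say, $1/2$ in virtue of the edgewise estimate, and the triangle inequality applied to the numerator yields the stated $\lp h_1 + h_2\rp$ prefactor. Because both polytopial interfaces lift to the same smooth curve $\Gamma^c$, the $C^2$ norm of $\etab_1$ is comparable (up to a constant depending only on $\Gamma^c$) to that of $\etab_2$, which allows the bound to be written in terms of $\lno \etab_2 \rno_{C^2(\overline{\Gamma^c_{h,2}})}$ alone as stated.

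The main obstacle I anticipate is the edgewise Taylor expansion step. One has to verify carefully that the first-order variation of the lift across a chord of the smooth surface scales like $h_i$ with coefficient exactly the $C^2$ norm of the parameterization $\etab_i$ rather than a higher-order quantity; this is essentially the local mechanism producing the classical $\mathcal O(h^2)$ geometric error in isoparametric interpolation theory, adapted here to the codimension-one surface setting.
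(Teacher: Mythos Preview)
The paper does not actually prove this proposition: it is introduced only with the phrase ``from a simple computation, we [have] the following'' and no argument is given. Your proposal therefore cannot be compared against a paper proof, but it is a correct and natural way to fill in that omitted computation. Factoring the map $\Gamma^c_{h,1}\to\Gamma^c_{h,2}$ through $\Gamma^c$ and applying the chain rule to reduce to the one-sided bounds $\lno \JJJ_{\etab^c_i}-1\rno_{C^0(\overline{\Gamma^c_{h,i}})}\le C h_i\lno\etab_i\rno_{C^2}$ is exactly the mechanism one would expect behind the stated estimate; the edgewise Taylor argument you describe (using that the vertices of $\EEE^j_{h,i}$ lie on $\Gamma^c$, together with \eqref{distance assumption}) is the standard route and does produce the $h_i$ factor with coefficient controlled by the second derivatives of the lift. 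The only step that deserves a sentence more of justification is the final absorption of $\lno\etab_1\rno_{C^2(\overline{\Gamma^c_{h,1}})}$ into $\lno\etab_2\rno_{C^2(\overline{\Gamma^c_{h,2}})}$: this is legitimate because both lifts parameterize the same $C^{k+1}$ curve $\Gamma^c$ over nearby chords, so either $C^2$ norm is comparable to the intrinsic curvature data of $\Gamma^c$; but it would be cleaner simply to carry both norms (or a single $\lno\Gamma^c\rno_{C^2}$-type quantity) through the estimate and absorb the resulting constant into $C$.
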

\subsection{Function Spaces and Discrete Lifting Operators}
Let $\alphab = (\alpha_i)_{i=1}^d$, $\alpha_i\ge0$ denote a multi-index, $|\alphab| = \sum_{i=1}^d \alpha_i$, and $\alphab! = \prod_{i=1}^d \alpha_i!$. For $\mathcal D=\Omega_i$ or $\Omega_{h,i}$ and for $m\in{\mathbb N}$, let $H^m(\mathcal D)$ denote the standard Sobolev space and $(H^m(\mathcal D))'$ the corresponding dual space; see \cite{adams2003sobolev}. For the purpose of studying the interface problems presented in this paper, we also need to consider the subspaces
\begin{equation*}
	H^1_{\Gamma^c_i}(\Omega_i) := \left\{ v \in H^1(\Omega_i) \,:\, \left. v\right|_{\Gamma^c_i} = 0 \right\}
\end{equation*}
and
\begin{equation*}
	H^1_{\Gamma^0_i}(\Omega_i) := \left\{ v \in H^1(\Omega_i) \,:\, \left. v\right|_{\Gamma^0_i} = 0 \right\}.
\end{equation*}

Also, for any $\xib\in\mathbb R^{\dimension}$, let $\xib^\alphab :=\xi_1^{\alpha_1}\xi_2^{\alpha_2}\cdots\xi_d^{\alpha_d}$ and $D^\alphab :=\partial^{|\alphab|}/\partial^{\alpha_1}\partial^{\alpha_1}\cdots\partial^{\alpha_d}$. For $\mathcal D=\Gamma^c$ or $\Gamma^c_{h,i}$, we consider the fractional Sobolev space $H^{m-\frac12}(\mathcal D)$. 
The $k$-th order Lagrange finite element space is defined by
\begin{equation*}
	V^k_{h,i} := \left\{ v\in C^0(\overline\Omega_{h,i}) \,\,:\,\, v|_{\KKK_{h,i}} \in P_k(\KKK_{h,i})\quad \forall \KKK_{h,i} \in \Omega_{h,i}\right\},
\end{equation*}
where $P_k(\KKK_{h,i})$ denotes the space of polynomials of order at most $k$ defined over a $d-$simplex $\KKK_{h,i}\in\mathbb R^\dimension$. 
and the trace spaces
\begin{subequations}
\begin{equation*}
	W^{c,k}_{h,i}:= V^k_{h,i}\big|_{\Gamma^c_{h,i}}= \left\{v \in C^0(\Gamma^c_{h,i})\,\,:\,\, v|_{\EEE^j_{h,i}} \in P_k(\EEE^j_{h,i}) \quad \forall\, \EEE^j_{h,i} \in \Gamma^c_{h,i}\right\}
\end{equation*}
\begin{equation*}
	W^{0,k}_{h,i}:= V^k_{h,i}\big|_{\Gamma^0_{h,i}}= \left\{v \in C^0(\Gamma^0_{h,i})\,\,:\,\, v|_{\EEE^j_{h,i}} \in P_k(\EEE^j_{h,i}) \quad \forall\, \EEE^j_{h,i} \in \Gamma^0_{h,i}\right\}
\end{equation*}
\end{subequations}
We also define the discontinuous finite element space
$$
\overline V^k_{h,i} := \left\{ v \in L^2(\Omega_{h,i}) :\,\, v|_{\KKK_{h,i}} \in P_k(\KKK_{h,i}) \quad \forall\, \KKK_{h,i} \in \Omega_{h,i} \right\}
$$
and the discrete differential operator $D_h^{\alpha} : \overline V^k_{h,i} \rightarrow L^2(\overline \Omega_{h,i})$ as follows: 
$$
D_h^{\alpha} v_h(\xb) := 
\begin{cases}
	D^\alpha v_h(\xib) &\textrm{ if } \xb \in \mathring{\KKK}_{h,i}\\
	0 &\textrm{ otherwise }.
\end{cases}
$$
Duality pairings over $\Omega_{h,i}$, $\Gamma^c_{h,i}$, and $\Gamma^0_{h,i}$ are defined by
\begin{equation*}
	\langle v, w \rangle_{\Omega_{h,i}} = \sum_{\KKK_{h,i}\in \Omega_{h,i}} \int_{\KKK_{h,i}} vw d\KKK_{h,i}\quad
\end{equation*}
\begin{equation*}
	\langle v, w \rangle_{\Gamma^c_{h,i}} = \sum_{j}\int_{\EEE^j_{h,i}} vw d{\EEE^j_{h,i}},
\end{equation*}
and
\begin{equation*}
	\langle v, w \rangle_{\Gamma^0_{h,i}} = \sum_{j} \int_{\EEE^j_{h,i}} vw d{\EEE^j_{h,i}},
\end{equation*}
respectively. 
``Broken'' Sobolev norms on $\Omega_{h,i}$, ${\Gamma^c_{h,i}}$, $\Gamma^0_(h,i)$ are defined by 
\begin{equation*}
\begin{aligned}
	\trino v \trino^2_{m, \Omega_{h,i}} &= \sum_{\KKK_{h,i}\in \Omega_{h,i}} \| v \|_{m, \KKK_{h,i}}^2 \,\,\,\,
	\forall\, v\in V^k_{h,i},\\
	\trino w \trino^2_{m, \Gamma^c_{h,i}} &= \sum_{\EEE^j_{h,i}\in \Gamma^c_{h,i}}\|w \|_{m, \EEE^j_{h,i}}^2
	\,\,\,\, \forall\, w\in W^{c,k}_{h,i} \quad\textrm{ and }\\
	\trino w \trino^2_{m, \Gamma^0_{h,i}} &= \sum_{\EEE^j_{h,i}\in \Gamma^0_{h,i}}\|w \|_{m, \EEE^j_{h,i}}^2
	\,\,\,\, \forall\, w\in W^{0,k}_{h,i},	
\end{aligned}
\end{equation*}
respectively. On the discrete spaces $V^k_{h,i}$, $W^{c,k}_{h,i}$, and $W^{0,k}_h$ we have the inverse inequalities involving the corresponding ``broken'' semi-norms given by
\begin{equation} \label{eqn: Inverse Inequality 1}
	\trino v \trino_{m, \Omega_{h,i}} \leq C h_i^{-1} \trino v \trino_{m-1, \Omega_{h,i}}
\,\,\,\,
	\forall, v\in V^k_{h,i},\; m=1,2,\ldots
\end{equation} 
\begin{equation*}
	\trino w \trino_{m+1/2, \Gamma^c_{h,i}} \leq Ch_i^{-\frac12} \trino w \trino_{m, \Gamma^c_{h,i}} \, \, \, \, \forall w\in W^{c,k}_{h,i},\; m=0,1,\ldots
\end{equation*}
and
\begin{equation*}
	\trino w \trino_{m+1/2, \Gamma^0_{h,i}} \leq Ch_i^{-\frac12} \trino w \trino_{m, \Gamma^0_{h,i}} \, \, \, \, \forall w\in W^{0,k}_{h,i},\; m=0,1,\ldots.
\end{equation*}

For simplicity of notation, we will define the product spaces
\begin{equation*}
	\mathbf H := H^1(\Omega_{h,1}) \times H^1(\Omega_{h,2}) \times H^{1/2}(\Gamma^c_{h,1}) \times H^{-1/2}(\Gamma^c_{h,2})
\end{equation*}
\begin{equation*}
	\VVV^k_h := V^k_{h,1}\times V^k_{h,2} \times W^{c,k}_{h,1} \times W^{c,k}_{h,2},
\end{equation*}
\begin{equation*}
	\WWW^k_h := W^{c,k}_{h,1} \times W^{c,k}_{h,2}
\end{equation*}
\begin{equation*}
	\mathbf H^{-1} := H^{-1}(\Omega_1) \times H^{-1}(\Omega_2),
\end{equation*}
and their norm taken to be the $\ell^2$ norm of their sub--norms. 

We will denote the $L^2\lp\Gamma^c_{h,i}\rp$ projection operator onto $W^{c,k}_{h,i}$ as $\pi^c_i\lp \cdot \rp: L^2\lp \Gamma^c_{h,i} \rp \rightarrow W^{c,k}_{h,i}$, defined by
\begin{equation}
	\int_{\Gamma^c_{h,i}} \mu \pi^c_2 w d\mathcal D= \int_{\Gamma^c_{h,i}} \mu w d\Gamma^c_{h,i} \quad  \forall \mu \in W^{c,k}_{h,i}.
\end{equation}

Lifting operators will be often used in this work. We will denote $\RRR^c_{h,i}: W^{c,k}_{h,i} \rightarrow V^k_{h,i} \cap H^1_{\Gamma^0_{h,i}}(\Omega_{h,i})$ and $\RRR^0_{h,i}: W^{0,k}_{h,i} \rightarrow V^k_{h,i} \cap H^1_{\Gamma^c_{h,i}}(\Omega_{h,i})$ as discrete bounded lifting operators. A simple inspection indicates that 
\begin{equation*}
	\textrm{Ker}\lp \RRR^c_{h,i}\rp \subset \textrm{Im}\lp \RRR^0_{h,i} \rp
	\quad\textrm{ and } \quad
	\textrm{Ker}\lp \RRR^0_{h,i}\rp \subset \textrm{Im}\lp \RRR^0_{h,i} \rp
\end{equation*}
For simplicity, we will denote $\RRR_{h,i} := \RRR^0_{h,i} + \RRR^c_{h,i}$. 

%We conclude this subsection by establishing that the norms over $\Gamma^c_{D,h}, \Gamma^c_{N,h}, \Gamma^c$ are equivalent with the following
We conclude this subsection by establishing the following proposition, which is a simple consequence of the piecewise $C^{k+1}$--diffeomorphic equivalence property between $\Gamma^c, \Gamma^c_{h,1}$, and $\Gamma^c_{h,2}$.
\begin{proposition} \label{prop: trace norm equivalence}
There exists positive constants $c_1, c_2, C_1, C_2$ such that for $m \in \mathbb R$ the following norm equivalence relations are satisfied
\begin{equation*}
	c_1 \lno \mu_1 \rno_{m, \Gamma^c_{h,1}} \leq \lno \mu^{(2)}_1 \rno_{m, \Gamma^c_2} \leq C_1 \lno \mu_1 \rno_{m, \Gamma^c_{h,1}} \quad \forall \mu_1 \in H^m\lp \Gamma^c_{h,1} \rp
\end{equation*}
\begin{equation*}
	c_2 \lno \mu_2 \rno_{m, \Gamma^c_{h,2}} \leq \lno \mu_2^{(1)} \rno_{m, \Gamma^c_{h,1}} \leq C_2 \lno \mu_2 \rno_{m, \Gamma^c_{h,2}} \quad \forall \mu_2 \in H^m \lp \Gamma^c_{h,2} \rp.
\end{equation*}
\end{proposition}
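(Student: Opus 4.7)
The plan is to reduce the statement to a well known fact: a $C^{k+1}$-diffeomorphism between two manifolds induces bounded isomorphisms on the corresponding Sobolev scales, and then to glue the local statements together using the piecewise structure of the interfaces. Concretely, I would first observe that $\etab^c_1 : \Gamma^c_{h,1} \to \Gamma^c$ and $\etab^c_2 : \Gamma^c_{h,2} \to \Gamma^c$ are, by the assumptions of the subsection on geometric notions, piecewise $C^{k+1}$-diffeomorphisms with inverses $\zetab^c_1$ and $\zetab^c_2$. Composing them yields a piecewise $C^{k+1}$-diffeomorphism $\Phi_{1,2} := \zetab^c_2 \circ \etab^c_1 : \Gamma^c_{h,1} \to \Gamma^c_{h,2}$, with inverse $\Phi_{2,1}$, so that $\mu_1^{(2)} = \mu_1 \circ \Phi_{2,1}$.

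Next I would establish the first chain of inequalities for nonnegative integer $m$. On each edge $\EEE^j_{h,1}$ the map $\Phi_{1,2}$ is $C^{k+1}$, with all derivatives up to order $k+1$ uniformly bounded and with the Jacobian $\JJJ^c_{1,(2)}$ bounded away from $0$ (by \eqref{eqn: Jacobian Bound} it is uniformly close to $1$ for $h$ small). The chain rule and a direct change of variables edge by edge give
\begin{equation*}
	c \, \lno \mu_1 \rno_{m, \EEE^j_{h,1}} \leq \lno \mu_1 \circ \Phi_{2,1} \rno_{m, \Phi_{1,2}(\EEE^j_{h,1})} \leq C \, \lno \mu_1 \rno_{m, \EEE^j_{h,1}},
\end{equation*}
with constants depending only on the $C^{k+1}$ norm of $\etab^c_i$ and on a lower bound for the Jacobians. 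Summing the squares over edges $j$ produces the claimed inequality in the broken norm $\trino \cdot \trino_{m, \Gamma^c_{h,i}}$ (which agrees with the standard $H^m$ norm in the cases $m=0$ and $m=1$ used in the formulation of $\mathbf H$).

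For non-integer $m \ge 0$ I would appeal to real interpolation: the interpolation identity $H^m = [L^2, H^{\lceil m \rceil}]_\theta$ and the uniform $L^2$ and $H^{\lceil m \rceil}$ bounds just established give the norm equivalence in $H^m$ with constants independent of $h$. For negative $m$ I would then invoke duality: the pullback operator $\mu \mapsto \mu \circ \Phi_{2,1}$ is an isomorphism on $H^{|m|}$, hence its adjoint is an isomorphism on $H^{-|m|}$, yielding the two--sided bounds there as well. The second chain of inequalities for $\mu_2$ follows by the same argument applied to $\Phi_{2,1}$ in place of $\Phi_{1,2}$.

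The main obstacle I anticipate is keeping the constants genuinely independent of $h$. The Jacobian is only asymptotically close to $1$, so one needs \eqref{eqn: Jacobian Bound} together with the fact that $\etab^c_i$ is defined from a fixed smooth surface $\Gamma^c$ via the implicit function theorem, giving uniform $C^{k+1}$ control independent of the mesh. The piecewise nature of $\Phi_{1,2}$ also requires some care for fractional $m>1/2$, since Sobolev regularity across vertices of the polytopial interface is constrained; here the assumption that vertices of $\Gamma^c_{h,i}$ lie on $\Gamma^c$ ensures that $\Phi_{1,2}$ is continuous at the vertices, so the broken-norm equivalence used in the paper is the appropriate object and no global smoothness is needed.
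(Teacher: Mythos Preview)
Your proposal is correct and follows exactly the approach the paper indicates: the paper itself offers no detailed proof, stating only that the proposition ``is a simple consequence of the piecewise $C^{k+1}$--diffeomorphic equivalence property between $\Gamma^c, \Gamma^c_{h,1}$, and $\Gamma^c_{h,2}$.'' Your edge-wise change of variables, interpolation for fractional $m$, and duality for negative $m$ are precisely the standard steps that flesh out this one-line remark, and your attention to the $h$-independence of the constants via \eqref{eqn: Jacobian Bound} is appropriate.
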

\subsection{An Averaged Taylor Series Extension} \label{subsection: Averaged Taylor Series}
Recall that, for every $\EEE_{h,i}^j \subset \Gamma_{h,i}$, $\KKK_{h,i}^j$ is the element of $\Omega_{h,i}$ that contains $\EEE_{h,i}^j$. We will then let $\left\{ S_{h,i}^{j,j^\prime} \right\}$ be a family of disjoint star--shaped domains with respect to the balls $\sigma_{h,i}^{j,j^\prime} \subset \KKK_{h,i}^{j}$ such that $S_{h,i}^{j,j^\prime} \cap \KKK_{h,i}^{l} = \emptyset$ if $j \neq l$, $\textrm{diam}\lp S_{h,i}^{j,j^\prime}\rp = \mathcal O(\delta_{h,i})$ and $\overline{\bigcup_{j,j^\prime} S^{j, j^\prime}_{h,i}} \supset \overline{\Omega_i\Delta\Omega_{h,i}}$, where $\Omega_i\Delta\Omega_{h,i}:= \lp \Omega_i \cup \Omega_{h,i} \rp \setminus \lp \Omega_i \cap \Omega_{h,i} \rp$ denotes the symmetric difference of $\Omega_i$ and $\Omega_{h,i}$. We also require that $\overline{S^{j, j^\prime}_{h,i} \cap \etab_i\lp\EEE_{h,i}^j\rp} = \overline{S^{j, j^\prime}_{h,i}}\cap\etab_i\lp \EEE_{h,i}^j\rp$ and $\overline{S^{j,j^\prime}_{h,i}\cap\EEE_{h,i}^j} = \overline{S^{j, j^\prime}_{h,i}} \cap \EEE_{h,i}^j$. We refer to Figure \ref{kite} as an example of how star-shaped domains $S^{l,j}_{h,i}$ can be built for a triangular mesh.
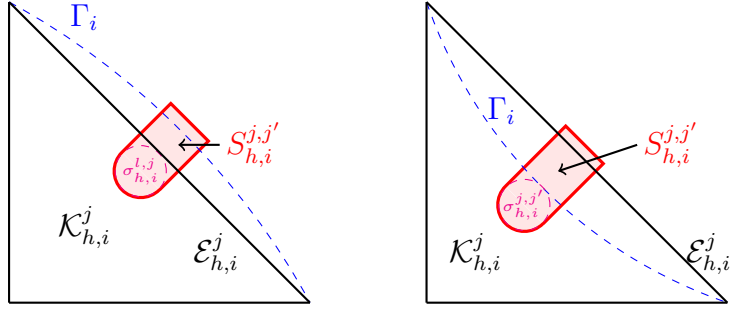
\begin{figure}[h!]
\begin{center}
\begin{tikzpicture}
\draw[red, very thick, fill= red!10!white] (1.5, 2) --(2.15, 2.65) -- (2.65, 2.15) -- (2.0,1.50)  (1.5, 2)  arc (135: 315: 0.3525);
\draw[blue, dashed] (0,4) .. controls (1.65, 3.1141) and (3.1141, 1.65) .. (4,0);
\draw[magenta, dashed] (1.75,1.75)  circle (0.34); 
\draw[black, thick] (0,0) -- (0,4);
\draw[black, thick] (0,0) -- (4,0);
\draw[black, thick] (0,4) -- (4,0);
\filldraw[black] (1.75, 1.75) node[anchor = center]{\color{magenta} \tiny $\sigma_{h,i}^{l,j}$};
\filldraw[black] (1, 1) node[anchor = center]{\color{black} \Large $\KKK_{h,i}^j$};
\filldraw[black] (2.75, 0.65) node[anchor = center]{\color{black} \Large $\EEE_{h,i}^j$};
\filldraw[black] (1,3.5) node[anchor = south]{\color{blue}\Large$\Gamma_i$};
\filldraw[black] (3.25, 2.1) node[anchor = center]{\color{red} \Large $S^{j,j^\prime}_{h,i}$};
\draw[black, thick, ->] (2.8, 2.1) -- (2.25, 2.1);
\end{tikzpicture}
\qquad \qquad 
\begin{tikzpicture}
\draw[red, very thick, fill= red!10!white] (1.5 - 0.45, 2- 0.45) --(2.15- 0.3, 2.65- 0.3) -- (2.65- 0.3, 2.15- 0.3) -- (2.0- 0.45,1.50- 0.45)  (1.5- 0.45, 2- 0.45)  arc (135: 315: 0.3525);
\draw[blue, dashed] (0,4) .. controls (1.65 - 1.0, 3.1141 - 1.0) and (3.1141- 1.0, 1.65 - 1.0) .. (4,0);
\draw[magenta, dashed] (1.75- 0.45,1.75- 0.45)  circle (0.34); 
\draw[black, thick] (0,0) -- (0,4);
\draw[black, thick] (0,0) -- (4,0);
\draw[black, thick] (0,4) -- (4,0);
\filldraw[black] (1.75 - 0.45, 1.75 - 0.45) node[anchor = center]{\color{magenta} \tiny $\sigma_{h,i}^{j,j^\prime}$};
\filldraw[black] (1 - 0.35, 1 - 0.35) node[anchor = center]{\color{black} \Large $\KKK_{h,i}^j$};
\filldraw[black] (2.75+ 1.0, 0.65) node[anchor = center]{\color{black} \Large $\EEE_{h,i}^j$};
\filldraw[black] (1,2.25) node[anchor = south]{\color{blue}\Large$\Gamma_i$};
\filldraw[black] (3.25, 2.1) node[anchor = center]{\color{red} \Large $S^{j,j^\prime}_{h,i}$};
\draw[black, thick, ->] (2.8, 2.1) -- (1.75, 1.75);
\end{tikzpicture}
\end{center}
\caption{Illustration of the construction of a star-shaped (with respect to $\sigma_{h,i}^{j,j^\prime}$) set $S_{h,i}^{j,j^\prime}\subset \mathbb R^\dimension$ for $\Gamma_i \cap \KKK_{h,i}^j = \emptyset$ (left) and  $\Gamma_i \cap \KKK_{h,i}^j \neq \emptyset$ (right).}\label{kite}
\end{figure}
Following \cite{brenner2007mathematical} we define, for $\xb \in \mathbb R^\dimension$ and $v \in L^2(\Omega_i \cap \Omega_{h,i})$, the averaged Taylor polynomial\footnote{The averaged Taylor polynomials on star-shaped domains $S^{j, j^\prime}_{h,i}$, are defined for functions in $L^1(\sigma^{j, j^\prime}_{h,i})$; see \cite[Corollary 4.1.15.]{brenner2007mathematical},}:
\begin{equation} \label{Taylor definition}
\left . T^{k}_{h,i} (v)\right |_{\xb} := \sum_{j,j^\prime} \mathbf 1_{S^{j,j^\prime}_{h,i}}(\xb) \displaystyle \int_{\sigma^{j,j^\prime}_{h,i}} {\left( \sum^k_{|\alphab| = 0} \frac{1}{\alphab !} D^\alphab v(\yb) (\xb - \yb)^\alphab  \phi_j(\yb) \right) d \yb }
\end{equation}
where $\phi_j(\yb)$ is a cutoff function with support over $\sigma^{j, j^\prime}_{h,i}$ and
\begin{equation*}
\mathbf 1_{S^{j, j^\prime}_{h,i}(\xb)} := 
	\begin{cases} 1 \quad \textrm{ if } \xb \in S^{j, j^\prime}_{h,i} \\ 
		0 \quad \textrm{ otherwise }
\end{cases}\end{equation*}
is the indicator function for the set $S^{j, j^\prime}_{h,i}$. Note that $T^k_{h,i}$ is meaningful only at $\xb \in \bigcup_{j,j^\prime} S^{j, j^\prime}_{h,i}$ and is zero otherwise. For any $\xib \in \Gamma_{h,i}$ and its image $\etab_i(\xib_i) \in \Gamma_i$ and $v \in L^2(\overline \Omega_i)$ we write 
\begin{equation} \label{Taylor on Gamma_h}
	v\circ\etab_i(\xib_i) = \left . T^k_{h,i} (v) \right |_{\etab_i(\xib_i)} + \left . R^k_{h,i} (v) \right|_{\etab_i(\xib_i)}.
\end{equation}
For $v \in H^{k+1}(\mathbb R^\dimension)$ we have $\|R^k_{h,i} (v) |_{\etab_i(\xib_i)}\|_{0, \Gamma^c_{h,i}} \le C \delta_{h,i}^{k+\frac12} |v|_{k+1, \mathbb R^\dimension}$ \cite[Appendix A, Lemma 2]{PE-FEM}
If $v \in \overline V^{k}_{h,i}$, then in every $\KKK_{h,i}^j$, $v$ is a polynomial of degree $k$, and therefore $T^k_{h,i} (v)$ reproduces exactly $v$ in any $\KKK_{h,i}^j$ adjacent to the boundary and it is equivalent to the classical Taylor polynomial. For $v\in \overline V^{k}_{h,i}$ we can therefore write, for a generic $\yb \in \KKK_{h,i}^j$:
$$
\begin{aligned}
\left . T^{k}_{h,i}(v) \right |_{\xb} &= \sum_{j, j^\prime} \mathbf 1_{S^{j, j^\prime}_{h,i}}(\xb) \sum^k_{|\alphab| = 0} \frac{1}{\alphab !} D^\alphab v(\yb)(\xb - \yb)^\alphab \\
&= \sum_j \mathbf 1_{\left(\cup _{j^\prime} S^{j, j^\prime}_{h,i}\right)}(\xb) \sum^k_{|\alphab| = 0} \frac{1}{\alphab !} D^\alphab v(\yb)(\xb - \yb)^\alphab
\end{aligned}
$$
We take now $\yb = \xib \in \mathring {\mathcal{E}}_i$ and $\xb = \etab_i(\xib_i)$, and we have that
\begin{equation} \label{Taylor discrete}
\left . T^{k}_{h,i}(v) \right |_{\etab_i(\xib_i)} = \sum^k_{|\alphab| = 0} \frac{1}{\alphab !} D_h^\alphab v(\xib_i)(\etab_i(\xib_i) - \xib_i)^\alphab
\end{equation}
which is well-defined for any $\xib_i \in \Gamma_{h,i}$ and $v\in \overline V_{h,i}^{k}$. For convenience, we also define $T^{k^\prime,k}_{h,i}$ as
\begin{equation} \label{Taylor discrete m-k}
\left . T^{k^\prime,k}_{h,i}(v) \right |_{\etab_i(\xib_i)} = \sum^k_{|\alphab| = k^\prime} \frac{1}{\alphab !} D_h^\alphab v(\xib_i)( \etab_i(\xib_i) - \xib_i)^\alphab.
\end{equation}
Clearly $T^{k}_{h,i} = T^{0,k}_{h,i}$. For vector functions $\mathbf v$, we introduce the vector operator $\mathbf{T}^{k}_{h,i}(\mathbf v) = \left(T^k_{h,i} v_i \right)_{i=1}^d$. We use this notation in particular for gradients of scalar functions (i.e., $\mathbf{T}^{k}_{h,i}(\nabla v)$).
\section{Problem Setting} \label{section: Problem Setting}
In this section we will describe the elliptic interface problem we wish to approximate the solution to and motivate the discrete coupling formulation we will use to accomplish this task.
\subsection{The Elliptic Interface Problem}
The continuous problem we consider in this paper is the elliptic interface problem with discontinuous coefficients given in the following
\begin{subequations} \label{eqn: Elliptic Interface Problem}
\begin{equation}
	\left\{
	\begin{aligned}
		-\nabla \cdot \lp p_i(\xb_i) \nabla u_i(\xb_i) \rp &= f_i \quad &\inn \Omega_i \\
										u_i &=0 \quad &\onn \Gamma_i^0
	\end{aligned}
	\right.
\end{equation}
\begin{equation} \label{eqn: Interface Conditions}
	\left.
	\begin{aligned}
		u_1 &= u_2 \\
		p_1(\xb_1)\frac{\partial u_1}{\partial \nn_1} + p_2(\xb_2)\frac{\partial u_2}{\partial \nn_2} &= 0
	\end{aligned}
	\right\} \onn \Gamma^c,
\end{equation}
\end{subequations}
where $\xb_i \in \Omega_i$. For the purpose of having $\lp u_1, u_2\rp$ belong to $H^{k+1}(\Omega_1)\times H^{k+1}(\Omega_2)$, as needed for determining optimal convergence rates for our method, we make the assumption that $p_i(\xb_i) \in C^{k+1}(\overline{\Omega_i})$ and $f_i(\xib_i) \in H^{k-1}(\Omega_i)$. As such, we have by the Sobolev extension theorem \cite[Chapter 5]{adams2003sobolev} that there exists an extension for each subdomain solution $\widetilde u_i \in H^{k+1}(\mathbb R^\dimension)$ such that $\left.\widetilde u_i\right|_{\Omega_i} = u_i$. Further, there exists extensions of $p_i$ and $f_i$ denoted as $\widetilde p_i$ and $\widetilde f_i$ respectively such that $\widetilde p_i \in C^{k+1}(\mathbb R^\dimension)$ and $\widetilde f_i \in H^{k-1}(\mathbb R^\dimension)$ and also $\left. \widetilde p_i \right|_{\Omega_i} = p_i$ and $\left. \widetilde f_i \right|_{\Omega_i} = f_i$. Further, these extensions are bounded in their respective norms, i.e.,
\begin{equation} \label{extension bounds}
\begin{aligned}
	\|\widetilde u_i\|_{k+1, \mathbb R^\dimension} &\leq C_e \|u_i\|_{k+1, \Omega_i} \\
	\|\widetilde p_i\|_{C^{k+1}(\mathbb R^\dimension)} &\leq C_e \|p_i\|_{C^{k+1}(\overline{\Omega_i})} \\
	\|\widetilde f_i\|_{k-1, \mathbb R^\dimension} &\leq C_e \|f_i\|_{k-1, \Omega_i}.
\end{aligned}
\end{equation}
\subsection{PE--FEM Subproblems Over $\Omega_{h,i}$} \label{PE-FEM Subproblems}
The Polynomial Extension Finite Element Method (PE-FEM) allows one to leverage the averaged Taylor series extensions described in Section \ref{subsection: Averaged Taylor Series} to obtain optimal convergence, with respect to interpolation, while retaining the generated polytopial mesh. Much of the details and explanations will be omitted for the sake of brevity. We refer the reader to \cite{PE-FEM} for more details behind the logic and intuition behind this approach. We will define two PE--FEM important problems that we will frequently refer to and state their stability properties here.

Let us define $a_{h,i}(\cdot, \cdot): H^1(\Omega_{h,i}) \times H^1(\Omega_{h,i}) \rightarrow \mathbb R$ as
\begin{equation*}
	a_{h,i}(w, v) := \int_{\Omega_{h,i}}\lp\widetilde p_i(\xb_i) \nabla w \cdot \nabla v\rp d\mathbf x\quad \forall w, v \in H^1(\Omega_{h,i}).
\end{equation*} 
Next, let us define $B^i_{h,D}(\cdot, \cdot): V^k_{h,i} \times V^k_{h,i}$ as
\begin{equation} \label{eqn: Dirichlet Bilinear Form}
	B^i_{h,D}(w, v) := a_{h,i}(w, v - \RRR_{h,i}v) + \theta_{h,i}\left< \left.T^k_{h,i} w\right|_{\etab_i\lp\xib_i\rp}, v \right>_{\Gamma_{h,i}} \quad \forall w,v \in V^k_{h,i},
\end{equation}
where $\theta_{h,i} \in \mathbb R^+$ is a positive constant such that $\theta_{h,i} \sim \mathcal O(h^{-1}_i)$ and $B^i_{h,N}(\cdot, \cdot): V^k_{h,i}\times V^k_{h,i}\rightarrow \mathbb R$ as
\begin{equation} \label{eqn: Neumann Bilinear Form}
\begin{aligned}
	B^i_{h,N}(w,v) &:= a_{h,i}(w, v - \RRR^0_{h,i}v) + \theta_{h,i} \left<\left.T^k_{h,i} w\right|_{\etab^0_i\lp\xib^0_i\rp}, v\right>_{\Gamma^0_{h,i}} \\
	&\qquad+ \tau_{i}(w,v) \quad\forall w,v \in V^k_{h,i} ,
\end{aligned}
\end{equation}
where $\tau_{i}: H^1(\Omega_{h,i}) \times H^{1/2}(\Gamma^c_{h,i})$ is defined as
\begin{equation} \label{eqn: Tau Definition}
	\tau_{i}(w, \mu) := \left< \widetilde p_i\circ\etab^c_i(\xib^c_i) \left.\Tb^{k-1}_{h,i}\nabla w\right|_{\etab^c_i\lp\xib^c_i\rp}\cdot \nn_i - \widetilde p_i(\xib^c_i)\nabla w \cdot \nn_{h,i}, \mu \right>_{\Gamma^c_{h,i}}
\end{equation}
for any $w \in H^1(\Omega_{h,i})$ and $\mu \in H^{1/2}(\Gamma^c_{h,i})$. 

We may now define the PE--FEM problems of interest for this paper. The importance of these problems will become apparent in the discussion of the coupling equations presented in \S \ref{sec: Coupling Equations}. First, consider the following: seek $u_{h,i} \in V^k_{h,i}$ such that 
\begin{equation} \label{eqn: Dirichlet Subproblem}
	B^i_{h,D}(u_{h,i}, v) = \left< \widetilde f_i, v - \RRR_{h,i} v \right>_{\Omega_{h,i}}
		+ \theta_{h,i} \left< g_D\circ\etab^c_i(\xib^c_i), v \right>_{\Gamma^c_{h,i}},
\end{equation}
where $g_D \in H^{1/2}(\Gamma^c)$. Second, consider: seek $u_{h,i}\in V^k_{h,i}$ such that
\begin{equation} \label{eqn: Neumann Subproblem}
	B^i_{h,N}(u_{h,i}, v) = \left< \widetilde f_i, v - \RRR^0_{h,i} v \right>_{\Omega_{h,i}}
		+ \left< g_N\circ\etab^c_i(\xib^c_i),v \right>_{\Gamma^c_{h,i}},
\end{equation}
where $g_N \in H^{-1/2}(\Gamma^c)$. The discrete problems \eqref{eqn: Dirichlet Subproblem} and \eqref{eqn: Neumann Subproblem} are meant to provide an optimal approximation to the solution of the following boundary value problems
\begin{equation*}
\left\{
\begin{aligned}
	-\nabla \cdot \lp p_i \nabla u_i \rp &= f_i \quad &\inn \Omega_i \\
						u_i &= 0 \quad &\onn \Gamma^0_i \\
						u_i &= g_D \quad &\onn \Gamma^c_i
\end{aligned}
\right.
\quad
\textrm{ and }
\quad
\left\{
\begin{aligned}
	-\nabla \cdot \lp p_i \nabla u_i \rp &= f_i \quad &\inn \Omega_i \\
						u_i &= 0 \quad &\onn \Gamma^0_i \\
						p_i \frac{\partial u_i}{\partial \nn_i} &= g_N \quad &\onn \Gamma^c_i
\end{aligned}
\right.
\end{equation*}
respectively. Following the spirit of the analysis presented in \cite[Appendix D]{PE-FEM} the following well--posedness results can be easily determined.
\begin{theorem}[Well--Posedness of \eqref{eqn: Dirichlet Subproblem}] \label{theorem: Dirichlet Subproblem Well-Posedness}
Let $B^i_{h,D}(\cdot,\cdot)$ be defined as in \eqref{eqn: Dirichlet Bilinear Form} with $\widetilde p_i(\xb_i) > 0$ everywhere in $\Omega_{h,i}$. Assume that $\theta_{h,i} \leq C_\theta h_i$ with $C_\theta$ chosen large enough and also assume that $\delta_{h,i} \sim \mathcal O(h_i^\frac32 )$. Then for $h_i$ small enough and $k = 1,2,\ldots$, we have that
\begin{equation} \label{eqn: Dirichlet Subproblem Boundedness}
	B^i_{h,D}(u,v) \leq M_{D,i} (1+\theta_{h,i}) \lno u \rno_{1, \Omega_{h,i}} \lno v \rno_{1, \Omega_{h,i}}
	\quad \forall u, v \in V^k_{h,i}
\end{equation}
and
\begin{equation} \label{eqn: Dirichlet Subproblem Coercivity}
	B^i_{h,D}(u,u) \geq \gamma_{D,i} \lno u \rno^2_{1, \Omega_{h,i}} \quad \forall u \in V^k_{h,i}.
\end{equation}
If $g_D\circ\etab^c_i(\xib^c_i) \in H^{1/2}(\Gamma^c_{h,i})$, then \eqref{eqn: Dirichlet Subproblem} has a unique solution and the solution satisfies the following stability bounds
\begin{subequations}
\begin{equation} \label{eqn: Dirichlet Subproblem Stability 1}
	\|u_{h,i}\|_{1, \Omega_{h,i}} \leq \Lambda_{D,i} \left\{ \lno \widetilde f_i \rno_{-1, \Omega_{h,i}} + \lno g_D\circ\etab^c_i(\xib^c_i) \rno_{1/2, \Gamma^c_{h,i}}  \right\}
\end{equation}
\begin{equation} \label{eqn: Dirichlet Subproblem Stability 2}
	\|u_{h,i}\|_{1, \Omega_{h,i}} \leq \Lambda_{D,i}^\prime \left\{ \lno \widetilde f_i \rno_{-1, \Omega_{h,i}} + h_i^{-\frac12}\lno g_D\circ\etab^c_i(\xib^c_i) \rno_{0, \Gamma^c_{h,i}}  \right\}.
\end{equation}
\end{subequations}
\end{theorem}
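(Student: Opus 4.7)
The plan is to mirror the Nitsche-type analysis in \cite[Appendix D]{PE-FEM}, carefully tracking how the averaged Taylor polynomial $T^k_{h,i}$ and the $\etab_i$ pullback interact with the standard estimates. Three ingredients must be assembled: continuity of $B^i_{h,D}$, its coercivity, and the stability bounds obtained via Lax--Milgram.

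For the boundedness \eqref{eqn: Dirichlet Subproblem Boundedness}, I would estimate the two pieces of $B^i_{h,D}$ separately. The bulk term is handled by the $L^\infty$ bound on $\widetilde p_i$ and the continuity of $\RRR_{h,i}$, yielding $|a_{h,i}(u, v-\RRR_{h,i}v)| \leq C \lno u\rno_{1,\Omega_{h,i}} \lno v\rno_{1,\Omega_{h,i}}$. For the penalty term, Cauchy--Schwarz on $\Gamma_{h,i}$ combined with the $L^2$-stability of the averaged Taylor polynomial (which on $V^k_{h,i}$ reduces to \eqref{Taylor discrete}) and the trace theorem yields a term of size $\theta_{h,i} \lno u\rno_{1,\Omega_{h,i}} \lno v \rno_{1, \Omega_{h,i}}$, producing the overall factor $(1+\theta_{h,i})$.

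The coercivity \eqref{eqn: Dirichlet Subproblem Coercivity} is the delicate part. Setting $v=u$, I split $u = u^0 + \RRR_{h,i}u$ with $u^0 := u-\RRR_{h,i}u$, which has vanishing trace on $\Gamma_{h,i}$. Then $a_{h,i}(u, u-\RRR_{h,i}u) = a_{h,i}(u^0, u^0) + a_{h,i}(\RRR_{h,i}u, u^0)$; the diagonal term is coercive by positivity of $\widetilde p_i$, and the cross term is absorbed by Young's inequality modulo a contribution $C\lno \RRR_{h,i}u\rno_{1,\Omega_{h,i}}^2 \lesssim C h_i^{-1}\lno u\rno_{0,\Gamma_{h,i}}^2$ coming from continuity of the lifting followed by the inverse trace inequality. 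For the penalty, I rewrite $\theta_{h,i} \langle T^k_{h,i}u|_{\etab_i(\xib_i)}, u\rangle_{\Gamma_{h,i}} = \theta_{h,i}\lno u\rno_{0,\Gamma_{h,i}}^2 + \theta_{h,i}\langle T^{1,k}_{h,i}u|_{\etab_i(\xib_i)}, u\rangle_{\Gamma_{h,i}}$, isolating via \eqref{Taylor discrete m-k} the higher-order Taylor tail. The first part provides positive $L^2$-trace control capable of absorbing the $h_i^{-1}\lno u\rno_{0,\Gamma_{h,i}}^2$ term from the cross-term bound once $C_\theta$ is sufficiently large, while the second is bounded using inverse estimates on $D^\alpha_h u$ together with $\delta_{h,i} = \mathcal{O}(h_i^{3/2})$, making it strictly subordinate to the main $\theta_{h,i}\lno u\rno_{0,\Gamma_{h,i}}^2$ once $h_i$ is sufficiently small. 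Poincar\'e applied to $u^0$ combined with promoting $\lno u\rno_{0,\Gamma_{h,i}}^2$ to an $H^{1/2}$-trace contribution then yields the full $\lno u\rno_{1,\Omega_{h,i}}^2$ on the right-hand side.

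The stability bounds follow by selecting $v=u_{h,i}$ in \eqref{eqn: Dirichlet Subproblem}, invoking the coercivity just established, and bounding the right-hand side. The forcing term is controlled by $\lno\widetilde f_i\rno_{-1,\Omega_{h,i}}\lno u_{h,i}\rno_{1,\Omega_{h,i}}$ via continuity of $\RRR_{h,i}$. For the boundary datum, estimate \eqref{eqn: Dirichlet Subproblem Stability 1} is obtained by pairing $\theta_{h,i}\langle g_D\circ\etab^c_i, u_{h,i}\rangle_{\Gamma^c_{h,i}}$ against a discrete lifting of $g_D\circ\etab^c_i$ (or equivalently using $H^{-1/2}$--$H^{1/2}$ duality together with the stability of the lifting) to avoid the spurious $h_i^{-1}$ that a naive Cauchy--Schwarz would introduce. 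Estimate \eqref{eqn: Dirichlet Subproblem Stability 2} instead uses a direct $L^2(\Gamma^c_{h,i})$ pairing with the discrete trace inverse inequality, yielding the explicit $h_i^{-1/2}$ factor. The principal obstacle throughout is the coercivity step, specifically the lower-order character of the $T^k_{h,i}$-perturbation in the penalty term: this is precisely where the interplay between $\theta_{h,i} \sim h_i^{-1}$, $\delta_{h,i} = \mathcal{O}(h_i^{3/2})$, and the inverse estimates on $D^\alpha_h u$ must be tracked meticulously, explaining both the lower bound hypothesis on $C_\theta$ and the smallness of $h_i$.
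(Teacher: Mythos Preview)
The paper does not actually prove this theorem; it only states that the result follows ``in the spirit of the analysis presented in \cite[Appendix D]{PE-FEM}.'' Your plan to reproduce that Nitsche-type argument is therefore exactly what the paper intends, and your treatment of boundedness and coercivity (splitting $u=u^0+\RRR_{h,i}u$, isolating the Taylor tail $T^{1,k}_{h,i}u$, and using $\delta_{h,i}=\mathcal O(h_i^{3/2})$ together with inverse estimates to render it lower order) is correct.

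One point deserves sharpening. For \eqref{eqn: Dirichlet Subproblem Stability 2} you invoke a ``discrete trace inverse inequality,'' but a direct $L^2$ pairing followed by any trace estimate on $u_{h,i}$ produces an $h_i^{-1}$ factor, not $h_i^{-1/2}$. The correct mechanism is that your coercivity argument actually yields the \emph{enhanced} lower bound
\[
B^i_{h,D}(u,u)\;\ge\;\gamma_{D,i}\lno u\rno_{1,\Omega_{h,i}}^2 \;+\; c\,\theta_{h,i}\lno u\rno_{0,\Gamma_{h,i}}^2,
\]
after which Young's inequality on $\theta_{h,i}\langle g_D\circ\etab^c_i,u_{h,i}\rangle_{\Gamma^c_{h,i}}\le \tfrac{c}{2}\theta_{h,i}\lno u_{h,i}\rno_{0,\Gamma^c_{h,i}}^2+\tfrac{1}{2c}\theta_{h,i}\lno g_D\circ\etab^c_i\rno_{0,\Gamma^c_{h,i}}^2$ absorbs the first piece and leaves $\theta_{h,i}\sim h_i^{-1}$ multiplying $\lno g_D\circ\etab^c_i\rno_{0,\Gamma^c_{h,i}}^2$; taking square roots gives the stated $h_i^{-1/2}$. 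Your description of \eqref{eqn: Dirichlet Subproblem Stability 1} via a discrete lifting of $g_D\circ\etab^c_i$ is the right idea: subtracting $\RRR^c_{h,i}\pi^c_i(g_D\circ\etab^c_i)$ from $u_{h,i}$ cancels the leading $\theta_{h,i}$-term (since $\langle g_D\circ\etab^c_i-\pi^c_i(g_D\circ\etab^c_i),v\rangle_{\Gamma^c_{h,i}}=0$ for $v\in V^k_{h,i}$) and reduces the right-hand side to quantities bounded by $\lno g_D\circ\etab^c_i\rno_{1/2,\Gamma^c_{h,i}}$.
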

\begin{theorem}[Well--Posedness of \eqref{eqn: Neumann Subproblem}]
Let $B^i_{h,N}(\cdot, \cdot)$ be defined as in \eqref{eqn: Neumann Bilinear Form} with $\widetilde p_i(\xib_i) > 0$ everywhere in $\overline{\Omega_{h,i}}$. Then for $h_i$ small enough and $k=1,2,\ldots$ we have that
\begin{equation} \label{eqn: Neumann Subproblem Boundedness}
	B^i_{h,N}(u_{h,i},v) \leq M_{N,i}\lno u_{h,i} \rno_{1, \Omega_{h,i}} \lno v \rno_{1, \Omega_{h,i}}
		\quad \forall v \in V^k_{h,i},
\end{equation}
if $u_{h,i} \in V^k_{h,i}$ satisfies \eqref{eqn: Neumann Subproblem}, 
\begin{equation} \label{eqn: Neumann Subproblem Boundedness 2}
	B^i_{h,N}(u_{h,i}, \RRR^c_{h,i} \mu_i) \leq M_{N,i}^\prime\lno u_{h,i} \rno_{1, \Omega_{h,i}} \lno \mu_i \rno_{1/2, \Gamma^c_{h,i}}
		\quad \forall \mu_i \in W^{c,k}_{h,i},
\end{equation}
and 
\begin{equation} \label{eqn: Neumann Subproblem Coercivity}
	B^i_{h,N}(u,u) \geq \gamma_{N,i} \lno u\rno^2_{1, \Omega_{h,i}}
	\quad \forall u \in V^k_{h,i}.
\end{equation}
If $g_N\circ\etab^c_i(\xib^c_i) \in H^{-1/2}(\Gamma^c_{h,i})$, then \eqref{eqn: Neumann Subproblem} has a unique solution and the solution satisfies the following stability bound
\begin{equation} \label{eqn: Neumann Subproblem Stability}
	\|u_{h,i}\|_{1, \Omega_{h,i}} \leq \Lambda_{N,i} \left\{ \lno \widetilde f_i \rno_{-1, \Omega_{h,i}} + \lno g_N\circ\etab^c_i(\xib^c_i) \rno_{-1/2, \Gamma^c_{h,i}}      \right\}.
\end{equation}
\end{theorem}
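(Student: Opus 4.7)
The plan is to follow the template of the PE-FEM Dirichlet analysis in \cite[Appendix D]{PE-FEM}, the new ingredient being the treatment of the interface consistency operator $\tau_i$ on $\Gamma^c_{h,i}$. I would first establish \eqref{eqn: Neumann Subproblem Boundedness}--\eqref{eqn: Neumann Subproblem Coercivity} and then invoke the Lax--Milgram lemma to obtain existence, uniqueness, and the stability bound \eqref{eqn: Neumann Subproblem Stability}.

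\textbf{Boundedness.} For \eqref{eqn: Neumann Subproblem Boundedness} I would apply Cauchy--Schwarz to $a_{h,i}(u_{h,i}, v - \RRR^0_{h,i}v)$ together with the lifting continuity bound $\lno v - \RRR^0_{h,i}v \rno_{1, \Omega_{h,i}} \leq C\lno v \rno_{1, \Omega_{h,i}}$; control the $\Gamma^0_{h,i}$-stabilization via the pointwise stability of $T^k_{h,i}$ on $\overline V^k_{h,i}$ (each derivative $D^\alphab_h u_{h,i}$ is controlled in $L^2(\Gamma^0_{h,i})$ by $\lno u_{h,i} \rno_{1, \Omega_{h,i}}$ after inverse and discrete trace estimates); and treat $\tau_i(u_{h,i}, v)$ analogously on $\Gamma^c_{h,i}$. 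For \eqref{eqn: Neumann Subproblem Boundedness 2} the key observation is that $\RRR^c_{h,i}\mu_i \in H^1_{\Gamma^0_{h,i}}(\Omega_{h,i})$, forcing both $\RRR^0_{h,i}(\RRR^c_{h,i}\mu_i) = 0$ and the vanishing of the $\Gamma^0_{h,i}$-stabilization term, so that $B^i_{h,N}(u_{h,i}, \RRR^c_{h,i}\mu_i)$ reduces to $a_{h,i}(u_{h,i}, \RRR^c_{h,i}\mu_i) + \tau_i(u_{h,i}, \RRR^c_{h,i}\mu_i)$; the claimed estimate then follows from $\lno \RRR^c_{h,i}\mu_i \rno_{1, \Omega_{h,i}} \leq C \lno \mu_i \rno_{1/2, \Gamma^c_{h,i}}$.

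\textbf{Coercivity.} For \eqref{eqn: Neumann Subproblem Coercivity} I would split $a_{h,i}(u, u-\RRR^0_{h,i}u) = a_{h,i}(u,u) - a_{h,i}(u, \RRR^0_{h,i}u)$; the first term dominates $p_{\min} \lsn u \rsn^2_{1, \Omega_{h,i}}$, while Young's inequality combined with the discrete lifting bound $\lno \RRR^0_{h,i}u \rno^2_{1, \Omega_{h,i}} \leq C h_i^{-1}\lno u \rno^2_{0, \Gamma^0_{h,i}}$ controls the cross term by $\epsilon \lsn u \rsn^2_{1, \Omega_{h,i}} + C_\epsilon h_i^{-1}\lno u \rno^2_{0, \Gamma^0_{h,i}}$. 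Decomposing
\begin{equation*}
\left. T^k_{h,i}u \right|_{\etab^0_i(\xib^0_i)} = u(\xib^0_i) + \left. T^{1,k}_{h,i}u\right|_{\etab^0_i(\xib^0_i)}
\end{equation*}
and using $\lsn \etab_i(\xib_i) - \xib_i \rsn = \mathcal O(h_i^2)$ together with inverse estimates for $D^\alphab_h u$, the stabilization term equals $\theta_{h,i}\lno u \rno^2_{0, \Gamma^0_{h,i}}$ up to a remainder of strictly positive order in $h_i$. Choosing the implicit constant in $\theta_{h,i}\sim h_i^{-1}$ large enough compensates the $C_\epsilon h_i^{-1}$ liability, leaving a coercive combination of $\lsn u \rsn^2_{1, \Omega_{h,i}}$ and $h_i^{-1}\lno u \rno^2_{0, \Gamma^0_{h,i}}$; a Friedrichs-type inequality exploiting the weakly imposed zero-Dirichlet condition on $\Gamma^0_i$ then upgrades the seminorm to the full $H^1$ norm.

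\textbf{Main obstacle and conclusion.} The hard part will be absorbing $\tau_i(u,u)$ into the coercive contribution. Splitting the integrand as
\begin{equation*}
\widetilde p_i\circ\etab^c_i \,\Tb^{k-1}_{h,i}\nabla u|_{\etab^c_i(\xib^c_i)}\cdot \nn_i - \widetilde p_i(\xib^c_i)\nabla u(\xib^c_i)\cdot \nn_{h,i} = I + II + III,
\end{equation*}
with $I = \widetilde p_i\circ\etab^c_i \, [\Tb^{k-1}_{h,i}\nabla u|_{\etab^c_i(\xib^c_i)} - \nabla u(\xib^c_i)]\cdot \nn_i$, $II = \widetilde p_i\circ\etab^c_i \,\nabla u(\xib^c_i)\cdot (\nn_i - \nn_{h,i})$, and $III = [\widetilde p_i\circ\etab^c_i - \widetilde p_i(\xib^c_i)]\nabla u(\xib^c_i)\cdot \nn_{h,i}$, the estimates $\lsn \etab_i(\xib_i) - \xib_i \rsn = \mathcal O(h_i^2)$ and $\lsn \nn_i - \nn_{h,i} \rsn = \mathcal O(h_i)$, combined with an inverse estimate applied to $D^2 u$ in $I$, deliver a pointwise bound of $\mathcal O(h_i)\lsn \nabla u \rsn$. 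A discrete trace inequality on $\Gamma^c_{h,i}$ and the continuous trace inequality for $u$ then yield $\lsn \tau_i(u,u) \rsn \leq C h_i^{1/2} \lno u \rno^2_{1, \Omega_{h,i}}$, absorbed for $h_i$ small enough. Once coercivity and boundedness are secured, Lax--Milgram provides existence and uniqueness of $u_{h,i}$, and \eqref{eqn: Neumann Subproblem Stability} follows by testing \eqref{eqn: Neumann Subproblem} against $u_{h,i}$ and bounding the right-hand side via $\lno v - \RRR^0_{h,i}v \rno_{1, \Omega_{h,i}} \leq C \lno v \rno_{1, \Omega_{h,i}}$ and the $H^{-1/2}$--$H^{1/2}$ duality on $\Gamma^c_{h,i}$.
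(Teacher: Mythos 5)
Your coercivity argument, the treatment of \eqref{eqn: Neumann Subproblem Boundedness 2} via the vanishing of $\RRR^c_{h,i}\mu_i$ on $\Gamma^0_{h,i}$, and the Lax--Milgram/stability step all match what the paper intends (it defers the details to the Dirichlet analysis of \cite[Appendix D]{PE-FEM}), and your $\mathcal O(h_i^{1/2})$ absorption of $\tau_i(u,u)$ is a legitimate sharpening of the generic bound in Lemma \ref{lemma: Tau Bounds}. The genuine gap is in your proof of \eqref{eqn: Neumann Subproblem Boundedness}. You bound the $\Gamma^0_{h,i}$--stabilization term $\theta_{h,i}\langle T^k_{h,i}u_{h,i}|_{\etab^0_i(\xib^0_i)}, v\rangle_{\Gamma^0_{h,i}}$ by inverse and discrete trace estimates alone; this yields at best $C\,\theta_{h,i}\lno u_{h,i}\rno_{1,\Omega_{h,i}}\lno v\rno_{1,\Omega_{h,i}}$, i.e.\ a constant that degenerates like $h_i^{-1}$ since $\theta_{h,i}\sim h_i^{-1}$. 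The statement, however, claims a constant $M_{N,i}$ that (per the Remark following the theorem) is independent of $\theta_{h,i}$, and this is precisely why the hypothesis ``if $u_{h,i}$ satisfies \eqref{eqn: Neumann Subproblem}'' appears: your plan never uses it. The intended mechanism is that a solution satisfies the weak vanishing of its extension on $\Gamma^0_i$ --- test \eqref{eqn: Neumann Subproblem} with $v=\RRR^0_{h,i}w$, $w\in W^{0,k}_{h,i}$; then $v-\RRR^0_{h,i}v=0$, the trace of $v$ on $\Gamma^c_{h,i}$ vanishes (killing both the $g_N$ term and $\tau_i$), and one is left with $\theta_{h,i}\langle T^k_{h,i}u_{h,i}|_{\etab^0_i(\xib^0_i)}, w\rangle_{\Gamma^0_{h,i}}=0$ for all $w\in W^{0,k}_{h,i}$. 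Since every test function $v\in V^k_{h,i}$ has $v|_{\Gamma^0_{h,i}}\in W^{0,k}_{h,i}$, the entire $\Gamma^0_{h,i}$ term drops out of $B^i_{h,N}(u_{h,i},v)$, leaving only $a_{h,i}(u_{h,i},v-\RRR^0_{h,i}v)+\tau_i(u_{h,i},v)$, which is bounded with a $\theta$--independent constant.

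This is not a cosmetic point: the $\theta$--independence of $M_{N,i}$ is used downstream (e.g.\ in deriving \eqref{eqn: Rho Bound} and throughout the error analysis), so a bound of the form $C h_i^{-1}\lno u_{h,i}\rno_{1,\Omega_{h,i}}\lno v\rno_{1,\Omega_{h,i}}$ would not suffice. Your existence, uniqueness, and stability conclusions are unaffected (coercivity alone gives solvability of the finite--dimensional system, and \eqref{eqn: Neumann Subproblem Stability} follows by testing with $u_{h,i}$ as you describe), but the proof of \eqref{eqn: Neumann Subproblem Boundedness} must be repaired along the lines above.
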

\begin{remark}
	The constant $M_{N,i}$ in the continuity bound \eqref{eqn: Neumann Subproblem Boundedness} is independent of $\theta_{h,i}$ because we have enforced that the polynomial extension from $\Gamma^0_{h,i}$ onto $\Gamma^0_i$ is zero weakly, whereas the constant $M_{N,i}^\prime$ in the continuity bound \eqref{eqn: Neumann Subproblem Boundedness 2} is independent of $\theta_{h,i}$ because $\RRR^0_{h,i}\lp\RRR^c_{h,i} \mu_i\rp = 0$.
\end{remark}
\begin{remark}
	The stability constants $\Lambda_{D,i}, \Lambda_{D,i}^\prime$ and $\Lambda_{N,i}$ are independent of $\theta_{h,i}$.
\end{remark}
\subsection{The Coupling Formulation} \label{sec: Coupling Equations}
We will now present our coupling formulation. The comprehensive set of variational equations is to seek $\lp u_{h,2}, u_{h,2}, \lambda_h \rp \in \VVV^k_h$ that satisfies
\begin{equation} \label{eqn: MVI}
\begin{aligned}
\begin{aligned}
	B^1_{h,D}(u_{h,1}, v_1) - \theta_{h,1} \left<\lambda_h, v_1 \right>_{\Gamma^c_{h,1}}&= \left< \widetilde f_1, v_1 - \RRR_{h,1} v_1 \right>_{\Omega_{h,1}} \\
	B^2_{h,D}(u_{h,2}, v_2) - \theta_{h,2} \left<\lambda_h ^{(2)}, v_2 \right>_{\Gamma^c_{h,2}}  &= \left< \widetilde f_2, v_2 - \RRR_{h,2} v_2 \right>_{\Omega_{h,2}} \\
	B^1_{h,N}(u_{h,1}, \RRR^c_{h,1} \mu_1) + \left< \rho_h^{(1)}, \mu_1 \right>_{\Gamma^c_{h,1}} &= \left<\widetilde f_1, \RRR^c_{h,1} \mu_1 \right>_{\Omega_{h,1}}\\
	B^2_{h,N}(u_{h,2}, \RRR^c_{h,2} \mu_2) - \left< \rho_h , \mu_2 \right>_{\Gamma^c_{h,2}} &= \left<\widetilde f_2, \RRR^c_{h,2} \mu_2 \right>_{\Omega_{h,2}}
\end{aligned}
\\
\begin{aligned}
	\forall \lp v_1, v_2, \mu_1, \mu_2 \rp \in \VVV^k_h,
\end{aligned}
\end{aligned}
\end{equation}
%Motivate the formulation
From a simple inspection, it is easily determined that
\begin{equation*}
	\lambda_h = \Pi^0_{W^{c,k}_{h,1}}\lp \left.T^k_{h,1} u_{h,1}\right|_{\etab^c_1\lp \xi^c_1 \rp},\rp\end{equation*}
and the second equation of \eqref{eqn: MVI} implies that
\begin{equation} \label{eqn: Extension Dirichlet Matching Condition}
	\left. T^k_{h,2} u_{h,2} \right|_{\etab^c_2\lp \xib^c_{2} \rp} \approx \left.T^k_{h,1} u_{h,1}\right|_{\etab^c_2\lp \xib^c_{1,(2)} \rp}.
\end{equation}
and hence, the extension of the subdomain solutions match approximately on the continuous interface $\Gamma^c$.
Further inspection implies that
\begin{equation*}
	\left<\rho_h, \mu_2 \right>_{\Gamma^c_{h,2}} \approx \left<\widetilde p_2\circ\etab^c_2\lp \xib^c_2 \rp \left.\Tb^{k-1}_{h,2}\nabla u_{h,2}\right|_{\etab^c_2\lp \xib^c_2\rp} \cdot \nn_2, \mu_2 \right>_{\Gamma^c_h}
 \quad \forall \mu_2 \in W^{c,k}_{h,2}.
\end{equation*}
We refer the reader to \cite[Section 3]{PE-FEM} for a more detailed explanation. From this, it becomes apparent from the third equation of \eqref{eqn: MVI} that  
\begin{equation} \label{eqn: Extension Neumann Matching Condition}
	\sum_{i=1,2} \widetilde p_i\circ\etab^c\lp \xib^c \rp \left.\Tb^{k-1}_{h,i}\nabla u_{h,i}\right|_{\etab^c\lp \xib^c\rp} \cdot \nn_i  \approx 0.
\end{equation}
For narrative simplicity, we will refer to $\widetilde p_i\circ\etab^c\lp \xib^c \rp \left.\Tb^{k-1}_{h,i}\nabla u_{h,i}\right|_{\etab^c\lp \xib^c\rp} \cdot \nn_i$ as the \emph{extended co--normal derivative of $u_{h,i}$}. It then becomes clear from \eqref{eqn: Extension Dirichlet Matching Condition} and \eqref{eqn: Extension Neumann Matching Condition} that \eqref{eqn: MVI} approximates \eqref{eqn: Elliptic Interface Problem} by enforcing that the polynomial extensions of $u_{h,i}$ match weakly and that the extended co--normal derivatives are approximately balanced. Of course, the higher order convergence rates obtained by this method is due to the inclusion of the extension operators used to approximate the Dirichlet and Neumann interface conditions given by \eqref{eqn: Interface Conditions}. 
\section{Analysis} \label{section: Analysis}
In this section, we will present the necessary theoretical tools for the analysis of our discrete coupling formulation. We will then state and prove the well--posedness and $H^1$--optimality results.
\subsection{Dirichlet PE--FEM Solution Operators}
We introduce three operators for the purpose of the analysis of \eqref{eqn: MVI}. First, we define $\GGG_{h,i}(\cdot): H^{-1}(\Omega_{h,i}) \rightarrow V^k_{h,i}$ as the solution operator for the following problem: Given $\chi_i \in H^{-1}(\Omega_{h,i})$, seek $\phi_{h,i} \in V^k_{h,i}$ such that
\begin{equation} \label{eqn: Green's Function Dirichlet PE-FEM}
	B^i_{h,D}(\phi_{h,i}, v_i) = \left< \chi_i, v_i - \RRR_{h,i} v_i \right>_{\Omega_{h,i}} \quad \forall v_i \in V^k_{h,i}.
\end{equation}
We will denote $\GGG_{h,i} \chi_i := \phi_{h,i}$. Next, we define $\HHH^c_{h,i}(\cdot): H^{-1/2}(\Gamma^c_{h,i}) \rightarrow V^k_{h,i}$ as the solution operator of: seek $\psi^c_{h,i} \in V^k_{h,i}$ such that
\begin{equation} \label{eqn: Harmonic Dirichlet PE-FEM}
	B^i_{h,D}(\psi^c_{h,i}, v_i) = \theta_{h,i} \left< \nu, v_i \right>_{\Gamma^c_h}
	\quad \forall v_i \in V^k_{h,i}.
\end{equation}
We will similarly denote $\HHH^c_{h,i}\nu := \psi^c_{h,i}$, $\nu \in H^{-1/2}(\Gamma^c_{h,i})$. 
Finally, we will denote $\widehat\HHH^c_{h,i}: H^{-1/2}(\Gamma^c_{h,i}) \rightarrow V^k_{h,i} \cap H^1_{\Gamma^0_{h,i}(\Omega_{h,i})}$ as the solution operator of the variational problem: seek $\widehat \psi^c_{h,i} \in V^k_h$ such that
\begin{equation} \label{eqn: Perturbed Dirichlet PE-FEM}
	B^i_{h,D}(\widehat \psi^c_{h,i}, v_i) - \theta_{h,i}\left<\left.T^{1,k}_{h,i} \widehat\psi^c_{h,i}\right|_{\etab^0_{i}(\xib^0_i)}, v_i \right>_{\Gamma^0_{h,i}} = \theta_{h,i}\left< \mu, v\right>_{\Gamma^c_{h,i}} \quad \forall v_i \in V^k_{h,i}.
\end{equation}
Likewise, we denote $\widehat\HHH^c_{h,i}\nu := \widehat \psi^c_{h,i}$, $\nu \in H^{-1/2}(\Gamma^c_h)$. An inspection of \eqref{eqn: Perturbed Dirichlet PE-FEM} indicates that $\widehat \psi^c_{h,i} = 0$ on $\Gamma^c_{h,i}$ by means of the weak enforcement
\begin{equation*}
	\theta_{h,i}\left< \widehat \psi^c_{h,i}, v_i \right>_{\Gamma^0_{h,i}} = 0 \quad \forall v_i \in V^k_{h,i}.
\end{equation*}
\eqref{eqn: Perturbed Dirichlet PE-FEM} can easily be determined to be well--posed since the perturbation on $B^i_{D,h}(\cdot, \cdot)$ is miniscule. Through a modified set of steps presented in the analysis found in \cite[Appendix B]{PE-FEM}, we are able to demonstrate that the following stability bound is satisfied:
\begin{equation} \label{eqn: Perturbed Dirichlet PE-FEM Stability}
	\lno \widehat\HHH^c_{h,i} \nu \rno_{1, \Omega_{h,i}} \leq \widehat\Lambda_{D,i} \|\nu\|_{1/2, \Gamma^c_{h,i}}.
\end{equation}
This stability bound then allows us to prove the following.
\begin{lemma} \label{eqn: Perturbation Operator Difference}
	Let $\HHH^c_{h,i}\lp \cdot \rp$ and $\widehat\HHH^c_{h,i} \lp \cdot \rp$ be the solution operators for the problems defined in \eqref{eqn: Harmonic Dirichlet PE-FEM} and \eqref{eqn: Perturbed Dirichlet PE-FEM} respectively, then if $\delta \sim \mathcal O(h_i^2)$ we have that
\begin{equation}
	\lno \HHH^c_{h,i} - \widehat\HHH^c_{h,i} \rno_{H^{1/2}(\Gamma^c_{h,i}) \rightarrow H^1(\Omega_{h,i})} \leq Ch_i,
\end{equation}
\end{lemma}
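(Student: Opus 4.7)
The plan is to let $e_i := \HHH^c_{h,i}\nu - \widehat\HHH^c_{h,i}\nu = \psi^c_{h,i} - \widehat\psi^c_{h,i} \in V^k_{h,i}$ and derive a variational equation for it by subtracting \eqref{eqn: Perturbed Dirichlet PE-FEM} from \eqref{eqn: Harmonic Dirichlet PE-FEM}. Because the two right-hand sides coincide as $\theta_{h,i}\langle \nu, v_i\rangle_{\Gamma^c_{h,i}}$, this gives
\begin{equation*}
B^i_{h,D}(e_i, v_i) = -\theta_{h,i}\left\langle \left.T^{1,k}_{h,i}\widehat\psi^c_{h,i}\right|_{\etab^0_i(\xib^0_i)}, v_i \right\rangle_{\Gamma^0_{h,i}}\quad\forall v_i \in V^k_{h,i}.
\end{equation*}
Testing with $v_i = e_i$ and invoking the coercivity bound \eqref{eqn: Dirichlet Subproblem Coercivity} reduces the proof to bounding the boundary functional on the right-hand side by $C h_i\|\nu\|_{1/2,\Gamma^c_{h,i}}\|e_i\|_{1,\Omega_{h,i}}$.

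The next step is to estimate the Taylor residual using the expansion \eqref{Taylor discrete m-k}: every summand is of the form $\tfrac{1}{\alphab!}D_h^\alphab \widehat\psi^c_{h,i}(\xib^0_i)(\etab^0_i(\xib^0_i)-\xib^0_i)^\alphab$ with $1\le|\alphab|\le k$, and carries a geometric factor of size at most $\delta_{h,i}^{|\alphab|} = O(h_i^{2|\alphab|})$ under the hypothesis $\delta_{h,i}\sim h_i^2$. Pairing this with the discrete trace estimate $\|D_h^\alphab \widehat\psi^c_{h,i}\|_{0,\Gamma^0_{h,i}} \le C h_i^{-1/2}\|D_h^\alphab \widehat\psi^c_{h,i}\|_{0,\Omega_{h,i}}$ and iterating the inverse inequality \eqref{eqn: Inverse Inequality 1} to get $\|D_h^\alphab \widehat\psi^c_{h,i}\|_{0,\Omega_{h,i}}\le C h_i^{1-|\alphab|}\|\widehat\psi^c_{h,i}\|_{1,\Omega_{h,i}}$, the dominant contribution comes from $|\alphab|=1$ and yields
\begin{equation*}
\left\|\left.T^{1,k}_{h,i}\widehat\psi^c_{h,i}\right|_{\etab^0_i(\xib^0_i)}\right\|_{0,\Gamma^0_{h,i}} \le C h_i^{3/2}\,\|\widehat\psi^c_{h,i}\|_{1,\Omega_{h,i}}.
\end{equation*}

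The main obstacle is extracting the full $h_i$ rate rather than $h_i^{1/2}$: combining the estimate above with $\theta_{h,i}\sim h_i^{-1}$ and the crude trace $\|e_i\|_{0,\Gamma^0_{h,i}}\le C\|e_i\|_{1,\Omega_{h,i}}$ leaves a $h_i^{1/2}$ gap. To close this gap I would pair the residual against $e_i$ in the $H^{-1/2}(\Gamma^0_{h,i})$–$H^{1/2}(\Gamma^0_{h,i})$ duality, controlling $\|e_i\|_{1/2,\Gamma^0_{h,i}}$ by $\|e_i\|_{1,\Omega_{h,i}}$ through the sharp trace theorem, and upgrading the $L^2$ estimate of the residual to a discrete $H^{-1/2}$-estimate that gains an extra factor of $h_i^{1/2}$. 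This is legitimate because $\widehat\psi^c_{h,i}$ vanishes strongly on $\Gamma^0_{h,i}$ (by taking $v_i$ supported on $\Gamma^0_{h,i}$ in \eqref{eqn: Perturbed Dirichlet PE-FEM}), so the tangential pieces of $\nabla \widehat\psi^c_{h,i}$ are identically zero along $\Gamma^0_{h,i}$ and the residual $T^{1,k}_{h,i}\widehat\psi^c_{h,i}|_{\etab^0_i}$ inherits an edge-wise cancellation structure amenable to a negative-norm gain. Finally, substituting the stability bound \eqref{eqn: Perturbed Dirichlet PE-FEM Stability} $\|\widehat\psi^c_{h,i}\|_{1,\Omega_{h,i}}\le \widehat\Lambda_{D,i}\|\nu\|_{1/2,\Gamma^c_{h,i}}$ and dividing by $\|e_i\|_{1,\Omega_{h,i}}$ delivers the advertised estimate.
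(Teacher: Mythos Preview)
Your difference equation for $e_i=\psi^c_{h,i}-\widehat\psi^c_{h,i}$ is correct and matches the paper, and your $L^2$ estimate
\[
\bigl\| T^{1,k}_{h,i}\widehat\psi^c_{h,i}\big|_{\etab^0_i(\xib^0_i)} \bigr\|_{0,\Gamma^0_{h,i}}
\le C h_i^{3/2}\,\|\widehat\psi^c_{h,i}\|_{1,\Omega_{h,i}}
\]
is exactly the bound the paper uses (it is \cite[Appendix~A, Lemma~4]{PE-FEM}). The gap is in how you convert this into a bound on $\|e_i\|_{1,\Omega_{h,i}}$. Testing with $v_i=e_i$ and invoking coercivity \eqref{eqn: Dirichlet Subproblem Coercivity} forces you to absorb the full factor $\theta_{h,i}\sim h_i^{-1}$ on the right-hand side, leaving you with $h_i^{1/2}$ rather than $h_i$. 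Your proposed repair---upgrading the residual to an $H^{-1/2}(\Gamma^0_{h,i})$ estimate that gains $h_i^{1/2}$---is not justified. The leading term of the residual is $\nabla_h\widehat\psi^c_{h,i}\cdot(\etab^0_i-\xib^0_i)$; even though the tangential gradient vanishes on $\Gamma^0_{h,i}$, the displacement $\etab^0_i-\xib^0_i$ is essentially normal, so the surviving contribution is $(\partial_{\nn_{h,i}}\widehat\psi^c_{h,i})\cdot O(h_i^2)$, a piecewise polynomial quantity with no mean-zero or oscillation structure that would yield a negative-norm gain. For discrete boundary functions one generally has $\|w_h\|_{-1/2}\gtrsim h_i^{1/2}\|w_h\|_0$, so you cannot improve the rate this way.

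The paper avoids this issue entirely: instead of testing with $e_i$, it recognises that $e_i$ solves a Dirichlet PE--FEM problem with zero source and boundary datum $-T^{1,k}_{h,i}\widehat\psi^c_{h,i}$ on $\Gamma^0_{h,i}$, and invokes the sharp stability estimate \eqref{eqn: Dirichlet Subproblem Stability 2},
\[
\|e_i\|_{1,\Omega_{h,i}}\le \Lambda'_{D,i}\,h_i^{-1/2}\,\bigl\| T^{1,k}_{h,i}\widehat\psi^c_{h,i}\bigr\|_{0,\Gamma^0_{h,i}}.
\]
The point is that \eqref{eqn: Dirichlet Subproblem Stability 2} already carries only an $h_i^{-1/2}$ prefactor, \emph{not} $\theta_{h,i}\sim h_i^{-1}$; this is a non-trivial feature of the PE--FEM stability theory (proved in \cite[Appendix~D]{PE-FEM}) that coercivity alone does not see. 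Combining with the $h_i^{3/2}$ bound on the residual and \eqref{eqn: Perturbed Dirichlet PE-FEM Stability} then gives the claimed $O(h_i)$ directly.
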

\begin{proof}
We begin by seeing that $\lp\HHH^c_{h,i} - \widehat\HHH^c_{h,i}\rp (\cdot): H^{1/2}(\Gamma^c_{h,i}) \rightarrow V^k_{h,i}$ is the solution operator for the following: seek $\psi^c_{h,i} - \widehat\psi^c_{h,i}$ such that
\begin{equation*}
	B^i_{D,h}\lp \psi^c_{h,i} - \widehat \psi^c_{h,i}, v_i \rp 
	- \theta_{h,i}\left< \left.T^{1,k}_{h,i}\widehat\psi^c_{h,i}\right|_{\etab^0_i\lp\xib^0_i\rp}, v_i \right>_{\Gamma^0_{h,i}} = 0 \quad \forall v_i \in V^k_{h,i}
\end{equation*}
By taking the difference between \eqref{eqn: Harmonic Dirichlet PE-FEM} and \eqref{eqn: Perturbed Dirichlet PE-FEM}. Then \eqref{eqn: Dirichlet Subproblem Stability 2} of Theorem \ref{theorem: Dirichlet Subproblem Well-Posedness} implies that
\begin{equation*}
\begin{aligned}
	\lno\lp\HHH^c_{h,i} - \widehat\HHH^c_{h,i}\rp \nu \rno_{1, \Omega_{h,i}}
	&\leq \Lambda^\prime_{D,i}h_i^{-\frac12} \lno \left.T^{1,k}_{h,i}\lp \widehat\HHH^c_{h,i} \nu\rp\right|_{\etab^0_i\lp\xib^0_i\rp}\rno_{0, \Gamma^0_{h,i}} \\
	&\leq C\Lambda^\prime_{D,i}h_i \lno \widehat\HHH^c_{h,i}\nu \rno_{1, \Omega_{h,i}} \\
	&\leq C h_i \|\nu\|_{1/2, \Gamma^c_{h,i}},
\end{aligned}
\end{equation*}
after applying \cite[Appendix A, Lemma 4]{PE-FEM} and \eqref{eqn: Perturbed Dirichlet PE-FEM Stability}. The result of this lemma results from the definition of the operator norm.
\end{proof}
We now prove that $\HHH^c_h\lp\cdot \rp$ is invertible. This bound will be essential in the stability analysis.
\begin{lemma}
	Let $\HHH^c_{h,i}\lp \cdot \rp$ be defined as in \eqref{eqn: Harmonic Dirichlet PE-FEM}, then the following bound is satisfied
\begin{equation} \label{eqn: Harmonic Lower Bound}
	\lno \HHH^c_{h,i} \mu \rno_{1, \Omega_{h,i}} \geq C\lno \mu \rno_{1/2, \Gamma^c_{h,i}} \quad \forall \mu \in W^{c,k}_{h,i},
\end{equation}
under the assumption that $\delta_{h,i} \sim \mathcal O(h_i^2)$.
\end{lemma}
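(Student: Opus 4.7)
The plan is to reduce the lower bound for $\HHH^c_{h,i}$ to a lower bound for the perturbed operator $\widehat\HHH^c_{h,i}$, using the previous Lemma to absorb the small discrepancy. Concretely, by the triangle inequality,
\begin{equation*}
\lno \HHH^c_{h,i}\mu \rno_{1,\Omega_{h,i}} \geq \lno \widehat\HHH^c_{h,i}\mu \rno_{1,\Omega_{h,i}} - \lno (\HHH^c_{h,i}-\widehat\HHH^c_{h,i})\mu \rno_{1,\Omega_{h,i}},
\end{equation*}
and the second term is $\leq Ch_i\|\mu\|_{1/2,\Gamma^c_{h,i}}$ by Lemma \ref{eqn: Perturbation Operator Difference}. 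It will therefore suffice to prove that $\|\widehat\HHH^c_{h,i}\mu\|_{1,\Omega_{h,i}} \geq c\|\mu\|_{1/2,\Gamma^c_{h,i}}$ for some constant $c > 0$, after which $\|\HHH^c_{h,i}\mu\|_{1,\Omega_{h,i}} \geq (c - Ch_i)\|\mu\|_{1/2,\Gamma^c_{h,i}}$, which gives the claim once $h_i$ is small enough to absorb the perturbation.

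The main work is the lower bound for $\widehat\HHH^c_{h,i}$. Write $\widehat\psi := \widehat\HHH^c_{h,i}\mu$. The plan is to identify $\widehat\psi|_{\Gamma^c_{h,i}}$ explicitly via a careful test function choice in \eqref{eqn: Perturbed Dirichlet PE-FEM}. Take $v = \RRR^c_{h,i} w$ for an arbitrary $w \in W^{c,k}_{h,i}$. Because $\RRR^c_{h,i}w$ vanishes on $\Gamma^0_{h,i}$, both the perturbation term $\theta_{h,i}\langle T^{1,k}_{h,i}\widehat\psi|_{\etab^0_i(\xib^0_i)}, v\rangle_{\Gamma^0_{h,i}}$ and the $\Gamma^0_{h,i}$ portion of the Taylor penalty in $B^i_{h,D}$ drop out; moreover $v - \RRR_{h,i}v = 0$ so the $a_{h,i}$ term also vanishes. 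What remains reduces to
\begin{equation*}
\langle T^k_{h,i}\widehat\psi|_{\etab^c_i(\xib^c_i)} - \mu, w \rangle_{\Gamma^c_{h,i}} = 0 \qquad \forall w \in W^{c,k}_{h,i},
\end{equation*}
i.e., $\pi^c_i\bigl( T^k_{h,i}\widehat\psi|_{\etab^c_i(\xib^c_i)} \bigr) = \mu$. Splitting $T^k_{h,i}\widehat\psi|_{\etab^c_i} = \widehat\psi|_{\Gamma^c_{h,i}} + T^{1,k}_{h,i}\widehat\psi|_{\etab^c_i}$ and noting that $\widehat\psi|_{\Gamma^c_{h,i}} \in W^{c,k}_{h,i}$ is fixed by $\pi^c_i$, we obtain the key identity
\begin{equation*}
\widehat\psi|_{\Gamma^c_{h,i}} = \mu - \pi^c_i\bigl( T^{1,k}_{h,i}\widehat\psi|_{\etab^c_i(\xib^c_i)} \bigr).
\end{equation*}

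The remaining step is to show that the Taylor correction is subdominant. Since $T^{1,k}_{h,i}$ only contains derivatives of order $\geq 1$ and $|\etab^c_i(\xib^c_i) - \xib^c_i| \lesssim \delta_{h,i} \sim h_i^2$, a leading-order bound together with an inverse trace inequality yields
\begin{equation*}
\lno T^{1,k}_{h,i}\widehat\psi|_{\etab^c_i(\xib^c_i)} \rno_{0,\Gamma^c_{h,i}} \leq C\delta_{h,i}\lno \nabla\widehat\psi \rno_{0,\Gamma^c_{h,i}} + \text{h.o.t.} \leq C h_i^{3/2}\lno \widehat\psi \rno_{1,\Omega_{h,i}},
\end{equation*}
and using $L^2$-stability of $\pi^c_i$ followed by the discrete inverse inequality between $H^{1/2}$ and $L^2$ on $W^{c,k}_{h,i}$ gives $\|\pi^c_i(T^{1,k}_{h,i}\widehat\psi|_{\etab^c_i})\|_{1/2,\Gamma^c_{h,i}} \leq C h_i \|\widehat\psi\|_{1,\Omega_{h,i}}$. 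Combining this with the continuous trace inequality $\|\widehat\psi|_{\Gamma^c_{h,i}}\|_{1/2,\Gamma^c_{h,i}} \leq C\|\widehat\psi\|_{1,\Omega_{h,i}}$ and the identity above yields $\|\mu\|_{1/2,\Gamma^c_{h,i}} \leq C\|\widehat\psi\|_{1,\Omega_{h,i}}$, which is the desired lower bound on $\widehat\HHH^c_{h,i}$. The main obstacle is verifying the $L^2$-to-$H^{1/2}$ inverse bound and the inverse trace estimate with the correct powers of $h_i$ so that $\delta_{h,i} \sim h_i^2$ is sharp enough to make the correction genuinely small; everything else is bookkeeping.
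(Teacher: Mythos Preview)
Your argument is correct, and its heart --- testing with $v=\RRR^c_{h,i}w$ to obtain the identity $\mu=\pi^c_i\bigl(T^k_{h,i}\psi|_{\etab^c_i(\xib^c_i)}\bigr)$, splitting off the zeroth-order term, and absorbing the $T^{1,k}_{h,i}$ remainder using $\delta_{h,i}\sim h_i^2$ together with an inverse inequality --- is precisely the paper's proof. The one difference is that your detour through $\widehat\HHH^c_{h,i}$ and Lemma~\ref{eqn: Perturbation Operator Difference} is unnecessary: the very same test-function computation works directly for $\psi=\HHH^c_{h,i}\mu$, since for $v=\RRR^c_{h,i}w$ one has $v|_{\Gamma^0_{h,i}}=0$ (so the $\Gamma^0_{h,i}$ penalty in $B^i_{h,D}$ already drops) and $v-\RRR_{h,i}v=0$ (so the $a_{h,i}$ term drops), leaving exactly $\langle T^k_{h,i}\psi|_{\etab^c_i}-\mu,w\rangle_{\Gamma^c_{h,i}}=0$. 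The paper therefore carries out your steps 2--4 on $\HHH^c_{h,i}$ itself and obtains the bound in four lines, without invoking the perturbed operator at all.
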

\begin{proof}
	Let $\psi^c_{h,i} = \HHH^c_{h,i} \mu$. Then we have that $\mu :=\pi^c_2\lp\left.T^{k}_{h,i} \psi^c_{h,i} \right|_{\etab^c_i\lp\xib^c_i\rp} \rp$. It then follows that
\begin{equation*}
\begin{aligned}
	\lno \psi^c_{h,i} \rno_{1, \Omega_{h,i}} &\geq C \lno \psi^c_{h,i} \rno_{1/2, \Gamma^c_{h,i}}\\
	& \geq C \lp \lno\mu\rno_{1/2, \Gamma^c_{h,i}} - h_i^{-\frac12}\lno\left. T^{1,k}_{h,i} \psi^c_{h,i} \right|_{\etab^c_i\lp\xib^c_i \rp} \rno_{0, \Gamma^c_{h,i}} \rp \\
	& \geq C \lp \lno \mu \rno_{1/2, \Gamma^c_{h,i}} - \sum_{|\alphab| = 1} \delta_{h,i}^{|\alphab|} h_i^{-|\alphab| + \frac12} \lno \psi^c_{h,i} \rno_{1, \Omega_{h,i}} \rp \\
	&\geq C \lno \mu \rno_{1/2, \Gamma^c_{h,i}},
\end{aligned}
\end{equation*}
after applying \cite[Appendix A, Lemma 4]{PE-FEM}, \eqref{eqn: Dirichlet Subproblem Stability 1}, and the assumption that $\delta_{h,i}\sim \mathcal O(h^2_i)$.
\end{proof}
\subsection{Stability Analysis}
We will now analyze the well--posedness of our coupling formulation. First, we begin by seeing that, by a change of variables for integrals, \eqref{eqn: MVI} is equivalent to seeking $\lp u_{h,1}, u_{h,2}, \lambda_h, \rho_h \rp \in \VVV^k_h$ such that
\begin{equation} \label{eqn: MVI Equivalent}
\begin{aligned}
\begin{aligned}
	B^1_{h,D}(u_{h,1}, v_1) - \theta_{h,1} \left<\lambda_h, v_1 \right>_{\Gamma^c_{h,1}}&= \left< \widetilde f_1, v_1 - \RRR_{h,1} v_1 \right>_{\Omega_{h,1}}  \\
	B^2_{h,D}(u_{h,2}, v_2) - \theta_{h,2} \left<\lambda_h^{(2)}, v_2 \right>_{\Gamma^c_{h,2}}  &= \left< \widetilde f_2, v_2 - \RRR_{h,2} v_2 \right>_{\Omega_{h,2}} \\
	\sum_{i=1,2} B^i_{h,N}\lp u_{h,i}, \RRR^c_{h,i} \mu_i \rp + \left< \rho_h, \JJJ^c_{1, (2)}\mu_1^{(2)} - \mu_2  \right>_{\Gamma^c_{h,2}} &= \sum_{i=1,2} \left<\widetilde f_i, \RRR^c_{h,i} \mu_i \right>_{\Omega_{h,i}}
\end{aligned}
\\
\begin{aligned}
	\forall \lp v_1, v_2, \mu_1, \mu_2 \rp \in \VVV^k_h.
\end{aligned}
\end{aligned}
\end{equation}
The first two equations in \eqref{eqn: MVI Equivalent} implies that
\begin{equation} \label{eqn: Solution Decomposition}
\begin{aligned}
	u_{h,i} &= \HHH^c_{h,1} \lambda_h^{{i}} + \GGG^c_{h,i} \widetilde f_i \\
\end{aligned}
\end{equation}
and thus the third equation in \eqref{eqn: MVI Equivalent} can be written as
\begin{equation} \label{eqn: Flux Coupling Equation}
\begin{aligned}
	&\sum_{i=1,2} B_{h,N}^i\lp \HHH^c_{h,i} \lambda_h^{(i)}, \RRR^c_{h,i} \mu_i \rp + \left< \rho_h, \JJJ^c_{1, (2)}\mu_1^{(2)}- \mu_2  \right>_{\Gamma^c_{h,2}} \\
	&\qquad= \sum_{i=1,2}\lp \left<\widetilde f_i, \RRR^c_{h,i} \mu_i \right>_{\Omega_{h,i}} - B_{h,N}^i\lp\GGG^c_{h,i}\widetilde f_i, \RRR^c_{h,i} \mu_i \rp \rp \quad \forall \lp \mu_1, \mu_2 \rp\in \WWW^k_h.
\end{aligned}
\end{equation}
This equation will be of paramount importance in the following analysis, as it allows us to determine a bound $\lambda_h$.
We now prove the following well--posedness result.
\begin{theorem} \label{theorem: Stability Bound}
Assume that $\widetilde p_i > 0$ everywhere on $\Omega_{h,i}$ and $\widetilde f_i \in H^{-1}(\Omega_{h,i})$ and $\theta_{h,i} \geq C_\theta h_i^{-1}$ with $C_\theta$ large enough. Then if $\delta_{h,i} \sim O(h_i^2)$ with $h_i$ small enough, the following stability bound is satisfied
\begin{equation}
	\lno \lp u_{h,1}, u_{h,2}, \lambda_h, \rho_h\rp \rno_{\mathbf H} \leq C\lno \lp\widetilde f_1, \widetilde f_2 \rp\rno_{\mathbf H^{-1}}.
\end{equation}
for $k=1,2,\ldots,$ and $\dimension = 2,3$.
\end{theorem}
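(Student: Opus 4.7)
The plan is to establish the stability bound in three stages, first reducing the problem to the control of the interface variables $\lambda_h$ and $\rho_h$ via the decomposition \eqref{eqn: Solution Decomposition} and the effective interface equation \eqref{eqn: Flux Coupling Equation}. By linearity, the stability bound \eqref{eqn: Dirichlet Subproblem Stability 1} applied separately to \eqref{eqn: Green's Function Dirichlet PE-FEM} and \eqref{eqn: Harmonic Dirichlet PE-FEM} immediately gives
$$\lno u_{h,i}\rno_{1, \Omega_{h,i}} \leq C\lp \lno \lambda_h^{(i)}\rno_{1/2, \Gamma^c_{h,i}} + \lno \widetilde f_i \rno_{-1, \Omega_{h,i}}\rp,$$
so it suffices to bound $\lambda_h$ and $\rho_h$.

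For $\rho_h$, I plan to isolate $\langle \rho_h, \mu_2\rangle_{\Gamma^c_{h,2}}$ by testing the fourth equation of \eqref{eqn: MVI} with an arbitrary $\mu_2 \in W^{c,k}_{h,2}$ (and $\mu_1 = v_1 = v_2 = 0$). The continuity bound \eqref{eqn: Neumann Subproblem Boundedness 2} together with the boundedness of $\RRR^c_{h,2}$ and duality then yields $\lno \rho_h \rno_{-1/2, \Gamma^c_{h,2}} \leq C\lp\lno u_{h,2}\rno_{1, \Omega_{h,2}} + \lno \widetilde f_2 \rno_{-1, \Omega_{h,2}}\rp$.

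For $\lambda_h$, I test \eqref{eqn: Flux Coupling Equation} with the pullback-compatible choice $(\mu_1, \mu_2) = (\lambda_h, \lambda_h^{(2)})$. With this choice, the $\rho_h$ term collapses to $\langle \rho_h, (\JJJ^c_{1,(2)} - 1)\lambda_h^{(2)}\rangle_{\Gamma^c_{h,2}}$, which is of order $\mathcal O(h)\lno\rho_h\rno_{-1/2}\lno\lambda_h\rno_{1/2}$ by \eqref{eqn: Jacobian Bound} and Proposition \ref{prop: trace norm equivalence}. The principal quantity $\sum_i B^i_{h,N}(\HHH^c_{h,i}\lambda_h^{(i)}, \RRR^c_{h,i}\lambda_h^{(i)})$ must be shown coercive in $\lambda_h$. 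To this end, I compare $B^i_{h,D}$ and $B^i_{h,N}$ directly from their definitions \eqref{eqn: Dirichlet Bilinear Form} and \eqref{eqn: Neumann Bilinear Form}; combined with the variational identity $B^i_{h,D}(\HHH^c_{h,i}\mu, \RRR^c_{h,i}\mu) = \theta_{h,i}\lno\mu\rno_{0, \Gamma^c_{h,i}}^2$ that follows by testing \eqref{eqn: Harmonic Dirichlet PE-FEM} with $v_i = \RRR^c_{h,i}\mu$, and the projection identity $\pi^c_i\, T^k_{h,i}\HHH^c_{h,i}\mu|_{\etab^c_i} = \mu$, the $\theta_{h,i}$-weighted terms cancel exactly, reducing the quantity to $a_{h,i}(\HHH^c_{h,i}\mu, \RRR^c_{h,i}\mu) + \tau_i(\HHH^c_{h,i}\mu, \RRR^c_{h,i}\mu)$. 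I then invoke Lemma \ref{eqn: Perturbation Operator Difference} to replace $\HHH^c_{h,i}$ with $\widehat\HHH^c_{h,i}$ at a cost of $\mathcal O(h_i)\lno\lambda_h\rno_{1/2}^2$, and use the fact that $\widehat\HHH^c_{h,i}\mu - \RRR^c_{h,i}\mu$ vanishes on $\Gamma^0_{h,i}$ weakly and on $\Gamma^c_{h,i}$ up to a Taylor correction, to reduce $a_{h,i}(\HHH^c_{h,i}\mu, \RRR^c_{h,i}\mu)$ to $a_{h,i}(\widehat\HHH^c_{h,i}\mu, \widehat\HHH^c_{h,i}\mu)$ plus lower-order perturbations. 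The latter is coercive in $\lno\widehat\HHH^c_{h,i}\mu\rno_1^2$ by Poincar\'e (valid since $|\Gamma^0_{h,i}| > 0$) and hence bounded below by $C\lno\mu\rno_{1/2, \Gamma^c_{h,i}}^2$ via \eqref{eqn: Harmonic Lower Bound}; the consistency term $\tau_i$ contributes a lower-order perturbation controlled by $\delta_{h,i} \sim h_i^2$ and the inverse inequality \eqref{eqn: Inverse Inequality 1}.

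Combining all estimates via Young's inequality with a small parameter and absorbing the $\mathcal O(h)$ cross terms (using $h$ sufficiently small) yields $\lno \lambda_h\rno_{1/2, \Gamma^c_{h,1}} \leq C\lno \lp \widetilde f_1, \widetilde f_2\rp\rno_{\mathbf H^{-1}}$; the bounds on $u_{h,1}, u_{h,2}$ and $\rho_h$ then follow from the first two reductions, completing the stability argument. I expect the main obstacle to be the coercivity step for the mixed quantity $\sum_i B^i_{h,N}(\HHH^c_{h,i}\lambda_h^{(i)}, \RRR^c_{h,i}\lambda_h^{(i)})$: since $\HHH^c_{h,i}$ and $\RRR^c_{h,i}$ lift $\lambda_h^{(i)}$ in distinct ways (the former through a PE--FEM Dirichlet problem with Taylor-based boundary enforcement, the latter as a direct finite element lifting), a careful bookkeeping of the trace discrepancies and Taylor-consistency errors under the geometric hypothesis $\delta_{h,i} \sim h_i^2$ is required to ensure that the perturbation terms are genuinely absorbable into the principal coercive quantity rather than accumulating with the $\theta_{h,i} \sim h_i^{-1}$ scaling.
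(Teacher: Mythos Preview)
Your overall architecture matches the paper's exactly: bound $\rho_h$ from the fourth equation, derive coercivity in $\lambda_h$ from the flux equation \eqref{eqn: Flux Coupling Equation} with $\mu_i$ chosen as (pullbacks of) $\lambda_h$, then recover $u_{h,i}$ from \eqref{eqn: Solution Decomposition}. Two points deserve comment.

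First, a minor slip: you test with $\mu_2=\lambda_h^{(2)}$, but $\lambda_h^{(2)}\notin W^{c,k}_{h,2}$ since the pullback destroys the piecewise-polynomial structure. The paper takes $\mu_2=\pi^c_2\lambda_h^{(2)}$ and uses the projection theorem to see that $\langle\rho_h,\lambda_h^{(2)}-\pi^c_2\lambda_h^{(2)}\rangle_{\Gamma^c_{h,2}}=0$, so only the Jacobian defect $\langle\rho_h,(\JJJ^c_{1,(2)}-1)\lambda_h^{(2)}\rangle$ survives. Your conclusion is right, but the test function must be projected first.

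Second, and more substantively, your coercivity argument is workable but takes a longer road than necessary. You keep $\RRR^c_{h,i}$ as a generic lifting and, after cancelling the $\theta_{h,i}$ terms, attempt to pass from $a_{h,i}(\HHH^c_{h,i}\mu,\RRR^c_{h,i}\mu)$ to $a_{h,i}(\widehat\HHH^c_{h,i}\mu,\widehat\HHH^c_{h,i}\mu)$ and then apply Poincar\'e. The paper instead exploits the freedom in the formulation to \emph{choose} the lifting $\RRR^c_{h,i}=\widehat\HHH^c_{h,i}$ outright. With this choice the principal term is $B^i_{h,N}(\HHH^c_{h,i}\lambda_h^{(i)},\widehat\HHH^c_{h,i}\lambda_h^{(i)})$, which one splits as
\[
B^i_{h,N}\bigl(\HHH^c_{h,i}\lambda_h^{(i)},\HHH^c_{h,i}\lambda_h^{(i)}\bigr)
+B^i_{h,N}\bigl(\HHH^c_{h,i}\lambda_h^{(i)},(\widehat\HHH^c_{h,i}-\HHH^c_{h,i})\lambda_h^{(i)}\bigr).
\]
The first term is immediately bounded below by $\gamma_{N,i}\lno\HHH^c_{h,i}\lambda_h^{(i)}\rno_{1,\Omega_{h,i}}^2\ge c\lno\lambda_h\rno_{1/2}^2$ via the ready-made Neumann coercivity \eqref{eqn: Neumann Subproblem Coercivity} and \eqref{eqn: Harmonic Lower Bound}; the second is $\mathcal O(h_i)\lno\lambda_h\rno_{1/2}^2$ by Lemma~\ref{eqn: Perturbation Operator Difference} and \eqref{eqn: Neumann Subproblem Boundedness}. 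This bypasses entirely the step you flag as the main obstacle --- the discrepancy between $\widehat\HHH^c_{h,i}\mu$ and a generic $\RRR^c_{h,i}\mu$ --- because that discrepancy never enters. Your route can be pushed through (the Taylor correction on $\Gamma^c_{h,i}$ is indeed $\mathcal O(h_i)$ in $H^{1/2}$), but it demands the careful bookkeeping you anticipate, whereas the paper's choice of lifting collapses the argument to two lines.
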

\begin{remark}
	This stability result implies that the solution to \eqref{eqn: MVI Equivalent} is unique, since the equations are linear.
\end{remark}
\begin{proof}
First, we begin by recalling that
\begin{equation*}
	\left< \rho_h, \mu_2 \right>_{\Gamma^c_{h,2}} = B^2_{h,N}\lp u_{h,2}, \RRR^c_{h,2} \mu_2 \rp - \left< \widetilde f_2, \RRR^c_{h,2} \mu_2 \right>_{\Gamma^c_{h,2}}.
\end{equation*}	
By applying \eqref{eqn: Neumann Subproblem Boundedness}, the solution decomposition \eqref{eqn: Solution Decomposition}, and \eqref{eqn: Dirichlet Subproblem Stability 1}, we then have that
\begin{equation} \label{eqn: Rho Bound}
	\lno \rho_h \rno_{-1/2, \Gamma^c_{h,2}} \leq C\lp \lno \lambda_h \rno_{1/2, \Gamma^c_{h,1}} + \lno \widetilde f_2 \rno_{-1, \Omega_{h,2}} \rp.
\end{equation}

Next, we proceed by choosing $\RRR^c_{h,i}\lp\cdot\rp = \widehat\HHH^c_{h,i} \lp\cdot\rp$, $\mu_1 = \lambda_h$, $\mu_2 = \pi^c_2\lambda_h^{(2)}$. The third equation in \eqref{eqn: MVI Equivalent} then becomes
\begin{equation*}
\begin{aligned}
	\sum_{i=1,2} B^i_{h,N}\lp \HHH^c_{h,i}\lambda_h^{(i)}, \widehat\HHH^c_{h,i}\lambda_h^{(i)} \rp &= \sum_{i=1,2} \left<\widetilde f_i, \widehat\HHH^c_{h,i} \lambda_h^{(i)} \right>_{\Omega_{h,i}} - B^i_{h,N}\lp \GGG^c_{h,i}\widetilde f_i, \widehat \HHH^c_{h,i} \lambda_h^{(i)} \rp \\
	&- \left< \rho_h, \lp \JJJ^c_{1,(2)} - 1 \rp \lambda_h^{(2)} \right>_{\Gamma^c_{h,2}} 
\end{aligned}
\end{equation*}
where we have utilized the projection theorem to see that
\begin{equation*}
	\left< \nu, \lambda_h - \pi^c_2\lambda_h^{(2)} \right>_{\Gamma^c_{h,2}} = 0 \qquad \forall \nu \in W^{c,k}_{h,2},
\end{equation*}
and the identity $\widehat\HHH^c_{h,2}\lp \pi^c_2 \lambda_h^{(2)} \rp = \widehat\HHH^c_{h,2} \lambda_h^{(2)}$. Using \eqref{eqn: Jacobian Bound}, \eqref{eqn: Rho Bound}, and Proposition \ref{prop: trace norm equivalence},  we have that
\begin{equation*}
\begin{aligned}
	\left< \rho_h, \lp \JJJ^c_{1,(2)} - 1 \rp \lambda_h^{(2)} \right>_{\Gamma^c_{h,2}} 
	&\leq Ch \lno \rho_h \rno_{-1/2, \Gamma^c_{h,2}} \lno \lambda_h \rno_{1/2, \Gamma^c_{h,1}}  \\
	&\leq Ch\lp \lno \lambda \rno^2_{1/2, \Gamma^c_{h,1}} + \lno \widetilde f_1 \rno_{-1, \Omega_{h,1}} \lno \lambda_h \rno_{1/2, \Gamma^c_{h,1}} \rp,
\end{aligned}
\end{equation*}
and hence
\begin{equation*}
\begin{aligned}
\sum_{i=1,2} B^i_{h,N}\lp \HHH^c_{h,i}\lambda_h^{(i)}, \widehat\HHH^c_{h,i}\lambda_h^{(i)} \rp - Ch\lno \lambda_h \rno^2_{1/2, \Gamma^c_{h,1}} \leq C\sum_{i=1,2} \lno\widetilde f_i \rno_{-1, \Omega_{h,i}} \lno \lambda_h \rno_{1/2, \Gamma^c_{h,1}},
\end{aligned}
\end{equation*}
after seeing that $\widehat \HHH^c_{h,i} \lambda_h^{i} = 0$ on $\Gamma^0_{h,i}$, and applying \eqref{eqn: Neumann Subproblem Boundedness}. We then have that
\begin{equation*}
\begin{aligned}
	&B^i_{h,N}\lp \HHH^c_{h,i} \lambda^{(i)}_h, \widehat \HHH^c_{h,i} \lambda^{(i)}_h \rp\\
	&\quad = 
		B^i_{h,N}\lp \HHH^c_{h,i} \lambda^{(i)}_h, \HHH^c_{h,i} \lambda^{(i)}_h \rp +
		B^i_{h,N}\lp \HHH^c_{h,i} \lambda^{(i)}_h, \lp\widehat\HHH^c_{h,i} - \HHH^c_{h,i}\rp \lambda^{(i)}_h \rp \\\
	&\quad \geq 
		C \lp\gamma_{N,i} - h_i \rp \lno \lambda_h \rno^2_{1/2, \Gamma^c_{h,1}},
\end{aligned}
\end{equation*}
by means of \eqref{eqn: Neumann Subproblem Coercivity}, Lemma \ref{eqn: Perturbation Operator Difference}, \eqref{eqn: Harmonic Lower Bound}, and applying \eqref{eqn: Neumann Subproblem Boundedness}. It then follows that
\begin{equation} \label{eqn: Lambda Bound}
	\lno \lambda_h \rno_{1/2, \Gamma^c_{h,1}} \leq C\sum_{i=1,2} \lno \widetilde f_i \rno_{-1, \Omega_{h,i}}.
\end{equation}

From \eqref{eqn: Solution Decomposition}, we have that
\begin{equation} \label{eqn: U Bound}
	\sum_{i=1,2} \lno u_{h,i} \rno_{1, \Omega_{h,i}} \leq C\lp \lno \lambda_h \rno_{1/2, \Gamma^c_{h,1}} + \sum_{i=1,2} \lno \widetilde f_i \rno_{-1, \Omega_{h,i}} \rp.
\end{equation}
The proof is thus concluded by substituting $\lno \lambda_h \rno_{1/2, \Gamma^c_{h,1}}$ in the above with \eqref{eqn: Lambda Bound}, and subsequently adding \eqref{eqn: Lambda Bound} and \eqref{eqn: Rho Bound} to the resulting bound. 
\end{proof}
\subsection{Error Analysis} \label{section: Error Analysis}
Here, we will present the analysis for the error of \eqref{eqn: MVI}.

First, we state some bounds that will become ubiquitous throughout the error analysis. First, we will denote $L_{i}(\cdot) := -\nabla \cdot \lp \widetilde p_i\lp \xb_i \rp \nabla \lp\cdot\rp \rp$, $\widehat f_i := L_{i} \widetilde u_i$, $u_{p,i} \in V^k_{h,i}$ as $u_{p,i} := \HHH^c_{h,i} \lp\widetilde u_i \circ \etab^c_i \lp \xib^c_i \rp\rp + \GGG^c_{h,i} \widetilde f_i$, and $e_{p,i} := \widetilde u_i - u_{p,i}$. Since $\widehat f_i$ is a bonafide extension of $f_i$, we may apply \cite[Appendix B, Lemma 7]{PE-FEM} to determine that
\begin{equation} \label{eqn: Extension Error}
	\lno \widetilde f_i - \widehat f_i \rno_{-1, \Omega_{h,i}} \leq Ch_i^{2k-2}\lno f_i \rno_{k-1, \Omega_i},
\end{equation}
under the assumption that $\delta_{h,i} \sim \mathcal O\lp h_i^2 \rp$. The error analysis for Dirichlet PE--FEM \cite[Theorem 3]{PE-FEM} imply the following bound
\begin{equation} \label{eqn: PE-FEM Error 1}
	\trino e_{p,i} \trino_{m, \Omega_{h,i}} \leq Ch_i^{k-m+1}\lsn u_i \rsn_{k+1, \Omega_{i}},
\end{equation}
for $m= 1,\ldots, k+1$. Finally, we also have from
\begin{equation*}
	\lno L_i e_{p,i} \rno_{-1, \Omega_{h,i}} = \sup_{\substack {v \in H^1_0(\Omega_{h,i}) \\ \lno v \rno_{1, \Omega_{h,i}} = 1}} \left< \widetilde p_i \nabla e_{p,i}, \nabla v\right>_{\Omega_{h,i}} \leq \lno \widetilde p_i \rno_{C^0\lp\overline{\Omega_{h,i}} \rp} \lno e_{p,i} \rno_{1, \Omega_{h,i}},
\end{equation*}
that the following is bound satisfied
\begin{equation}\label{eqn: PE-FEM Error 2}
	\lno L_{i} e_{p,i} \rno_{-1, \Omega_{h,i}} \leq Ch_i^k \lno u_i \rno_{k+1, \Omega_i}.
\end{equation}

Next, we will decompose $u_{h,i}:= u_{p,i} + z_{h,i}$, where $z_{h,i}\in V^k_{h,i}$ is the discrete error term. Using this decomposition, we are able to write \eqref{eqn: MVI Equivalent} in the following from: seek $\lp z_{h,1}, z_{h,2}, \iota_h, \omega_h  \rp \in \VVV^k_h$ such that
\begin{equation} \label{eqn: Discrete Error Equations}
\begin{aligned}
\begin{aligned}
	B^1_{h,1} \lp z_{h,1}, v_1 \rp - \theta_{h,1} \left< \iota_h, v_1 \right>_{\Gamma^c_{h,1}} &= \left< \kappa_{h,1}, v_1 - \RRR_{h,1} v_1 \right>_{\Gamma^c_{h,1}} \\
	B^2_{h,2} \lp z_{h,2}, v_2 \rp - \theta_{h,2} \left< \iota_h^{(2)}, v_2 \right>_{\Gamma^c_{h,2}} &= \left< \kappa_{h,2}, v_2 - \RRR_{h,2} v_2 \right>_{\Gamma^c_{h,2}} \\
	\sum_{i=1,2} B^i_{h,N} \lp z_{h,i}, \RRR^c_{h,i} \mu_i \rp
		+ \left<\omega_h, \JJJ^c_{1,(2)}\mu_1^{(2)} - \mu_2 \right>_{\Gamma^c_{h,2}} &=
	\MMM_h\lp\mu_1, \mu_2\rp
\end{aligned}
\\
\begin{aligned}
	\quad \forall \lp v_1, v_2, \mu_1, \mu_2 \rp \in \VVV^k_h,
\end{aligned}
\end{aligned}
\end{equation}
where we have defined
\begin{equation*}
\begin{aligned}
	&\iota_h := \lambda_h - \pi^c_1\lp \widetilde u_1 \circ \etab^c_1\lp \xib^c_1 \rp \rp, \quad
	\kappa_{h,i} := \widetilde f_i - \widehat f_i + L_i e_{p,i},\\ 
	&\omega_h := \rho_h - \widetilde p_2\circ \etab^c_2\lp \xib^c_2\rp \left. \Tb^{k-1}_{h,i} \lp\nabla u_{p,i}\rp \right|_{\etab^c_2\lp \xib^c_2 \rp} \cdot \nn_2,
\end{aligned}
\end{equation*}
and 
\begin{equation*}
\begin{aligned}
	\MMM_h\lp \mu_1, \mu_2 \rp &:= \sum_{i=1,2} \lp \left< \widetilde f_i - \widehat f_i, \RRR^c_{h,i}\mu_i \right>_{\Omega_{h,i}} + B^i_{h,N}\lp e_{p,i}, \RRR^c_{h,i}\mu_i \rp\rp \\
	&\quad + \left<\widetilde p_2\circ \etab^c_2\lp \xib^c_2 \rp \left.\Tb^{k-1}_{h,2}\lp \nabla e_{p,2}\rp\right|_{\etab^c_2\lp \xib^c_2 \rp} \cdot \nn_2, \JJJ^c_{1,(2)} \mu_1^{(2)} - \mu_2 \right>_{\Gamma^c_{h,2}} \\
	&\quad +\left<\widetilde p_1\circ \etab^c_1\lp \xib^c_1 \rp \left.\Rb^{k-1}_{h,1}\lp \nabla \widetilde u_1\rp\right|_{\etab^c_1\lp \xib^c_1 \rp} \cdot \nn_1, \mu_1 \right>_{\Gamma^c_{h,1}} \\
	&\quad +\left<\widetilde p_2\circ \etab^c_2\lp \xib^c_2 \rp \left.\Rb^{k-1}_{h,2} \lp\nabla \widetilde u_2\rp\right|_{\etab^c_2\lp \xib^c_2 \rp} \cdot \nn_2, \JJJ^c_{1,(2)}\mu_1^{(2)} \right>_{\Gamma^c_{h,2}}.
\end{aligned}
\end{equation*}
To streamline the exposition of the error analysis, we shall relegate the derivation of \eqref{eqn: Discrete Error Equations} to Appendix \ref{section: Error Equation Derivation}. 
\begin{theorem}
Assume that $\widetilde f_i \in H^{k-1}(\mathbb R^\dimension)$, and that the hypotheses of Theorem \ref{theorem: Stability Bound} hold. Then, the following error bound is satisfied
\begin{equation*}
	\lno \lp \widetilde u - u_{h,1}, \widetilde u - u_{h,2} \rp \rno_{H^1(\Omega_{h,1}) \times H^1(\Omega_{h,2})} \leq C\sum_{i=1,2} \lp h^k\lno u_i \rno_{k+1, \Omega_i} + h^{2k-1} \lno f_i \rno_{k-1, \Omega_i} \rp.
\end{equation*}
\end{theorem}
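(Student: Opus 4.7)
The plan is to split the error as $\widetilde u_i - u_{h,i} = e_{p,i} - z_{h,i}$, where $e_{p,i}$ is the PE--FEM projection error already analyzed by \eqref{eqn: PE-FEM Error 1} (which at $m=1$ yields $\lno e_{p,i} \rno_{1, \Omega_{h,i}} \leq Ch^k |u_i|_{k+1, \Omega_i}$), and $z_{h,i}$ solves the discrete error system \eqref{eqn: Discrete Error Equations}. Because this system has the same bilinear structure on the left-hand side as \eqref{eqn: MVI Equivalent}, I would invoke Theorem \ref{theorem: Stability Bound} applied to \eqref{eqn: Discrete Error Equations}. This immediately reduces the task to bounding the residual source $\kappa_{h,i}$ in $H^{-1}(\Omega_{h,i})$ and the functional $\MMM_h$ by the $\WWW^k_h$-norm of its arguments.

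For $\kappa_{h,i} = (\widetilde f_i - \widehat f_i) + L_i e_{p,i}$, the bound \eqref{eqn: Extension Error} controls the first summand by $Ch^{2k-2} \lno f_i \rno_{k-1, \Omega_i}$, while \eqref{eqn: PE-FEM Error 2} controls the second by $Ch^k \lno u_i \rno_{k+1, \Omega_i}$. The functional $\MMM_h(\mu_1, \mu_2)$ then decomposes into four groups: the volumetric extension-error terms $\langle \widetilde f_i - \widehat f_i, \RRR^c_{h,i} \mu_i\rangle_{\Omega_{h,i}}$, handled again by \eqref{eqn: Extension Error} and boundedness of $\RRR^c_{h,i}$; the Neumann bilinear residual $B^i_{h,N}(e_{p,i}, \RRR^c_{h,i}\mu_i)$, handled by continuity \eqref{eqn: Neumann Subproblem Boundedness 2} combined with \eqref{eqn: PE-FEM Error 1}; the Jacobian-correction boundary pairing, where \eqref{eqn: Jacobian Bound} supplies an $\mathcal O(h)$ factor that compensates the trace of $\Tb^{k-1}_{h,2} \nabla e_{p,2}$ via \eqref{eqn: PE-FEM Error 1}; and the two Taylor-remainder boundary terms involving $\Rb^{k-1}_{h,i}(\nabla \widetilde u_i)$, which are estimated via the remainder bound cited after \eqref{Taylor on Gamma_h} to give a contribution of order $h^k \lno u_i \rno_{k+1, \Omega_i}$ under the hypothesis $\delta_{h,i} \sim \mathcal O(h_i^2)$.

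The main obstacle is the simultaneous juggling of the three distinct function spaces living on $\Omega_{h,i}$, $\Gamma^c_{h,1}$, and $\Gamma^c_{h,2}$: transporting test data between the two discrete interfaces requires Proposition \ref{prop: trace norm equivalence} together with the Jacobian estimate \eqref{eqn: Jacobian Bound}, while passing between a boundary pairing and the $H^1$ norm of its lifted partner uses boundedness of $\RRR^c_{h,i}$ and the broken trace and inverse inequalities of \S\ref{section: Preliminaries}. The power-counting for the $f_i$-dependent contribution is the delicate point, since the raw bound \eqref{eqn: Extension Error} gives $h^{2k-2}$ but the claimed estimate is $h^{2k-1}$; the missing factor of $h^{1/2}$ in each of the two interface pairings is recovered by using the broken trace inverse inequality on $\Gamma^c_{h,i}$ together with the fact that $\widetilde f_i - \widehat f_i$ is supported in the $\mathcal O(\delta_{h,i})$-thick symmetric-difference layer, which sharpens the duality pairing against $\RRR^c_{h,i} \mu_i$ beyond the bare $H^{-1}$ estimate.

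Once each piece of $\MMM_h(\mu_1, \mu_2)$ and $\kappa_{h,i}$ is shown to be bounded as above, Theorem \ref{theorem: Stability Bound} supplies
\begin{equation*}
\lno z_{h,i} \rno_{1, \Omega_{h,i}} \leq C\sum_{j=1,2}\lp h^k\lno u_j \rno_{k+1, \Omega_j} + h^{2k-1}\lno f_j \rno_{k-1, \Omega_j}\rp,
\end{equation*}
and a triangle inequality with the projection error $e_{p,i}$, whose $H^1$ norm is already $\mathcal O(h^k \lno u_i \rno_{k+1, \Omega_i})$ by \eqref{eqn: PE-FEM Error 1}, closes the argument.
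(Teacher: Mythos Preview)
Your overall strategy---split $\widetilde u_i - u_{h,i} = e_{p,i} - z_{h,i}$, bound the residuals $\kappa_{h,i}$ and $\MMM_h$, and then appeal to stability---is exactly the paper's plan. The gap is in the sentence ``I would invoke Theorem~\ref{theorem: Stability Bound} applied to \eqref{eqn: Discrete Error Equations}.'' Theorem~\ref{theorem: Stability Bound} is not a black-box inf--sup statement; its conclusion is phrased only for right-hand sides of the specific form $\langle \widetilde f_i, v_i-\RRR_{h,i}v_i\rangle_{\Omega_{h,i}}$ and $\langle \widetilde f_i, \RRR^c_{h,i}\mu_i\rangle_{\Omega_{h,i}}$. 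The error system \eqref{eqn: Discrete Error Equations} has the general functional $\MMM_h(\mu_1,\mu_2)$ on the right of the third equation, so the theorem does not apply as stated. What the paper actually does is \emph{rerun} the stability argument: it chooses the particular lifting $\RRR^c_{h,i}=\widehat\HHH^c_{h,i}$ and the particular test pair $\mu_1=\iota_h$, $\mu_2=\pi^c_2\iota_h^{(2)}$, then uses the coercivity \eqref{eqn: Neumann Subproblem Coercivity} together with Lemma~\ref{eqn: Perturbation Operator Difference} and \eqref{eqn: Harmonic Lower Bound} to squeeze out the lower bound $\sum_i B^i_{h,N}(\HHH^c_{h,i}\iota_h^{(i)},\widehat\HHH^c_{h,i}\iota_h^{(i)})\geq c\|\iota_h\|_{1/2,\Gamma^c_{h,1}}^2$. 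Only after that does bounding $\MMM_h(\iota_h,\pi^c_2\iota_h^{(2)})$ become useful. Your proposal skips this step entirely, and without it there is no mechanism to control $\iota_h$ (and hence $z_{h,i}$).

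A second, smaller issue: you propose to bound $B^i_{h,N}(e_{p,i},\RRR^c_{h,i}\mu_i)$ via \eqref{eqn: Neumann Subproblem Boundedness 2}, but that continuity estimate is stated only for first arguments $u_{h,i}\in V^k_{h,i}$ solving \eqref{eqn: Neumann Subproblem}; $e_{p,i}=\widetilde u_i-u_{p,i}$ is neither discrete nor a Neumann solution. The paper instead expands $B^i_{h,N}(e_{p,i},\cdot)=a_{h,i}(e_{p,i},\cdot)+\tau_i(e_{p,i},\cdot)+\ldots$ and controls the $\tau_i$ piece with Lemma~\ref{lemma: Tau Bounds}, which handles the higher-order Taylor terms directly against $\trino e_{p,i}\trino_{2,\Omega_{h,i}}$ and $\trino e_{p,i}\trino_{k+1,\Omega_{h,i}}$ via \eqref{eqn: PE-FEM Error 1}. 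Your observation about the $h^{2k-2}$ versus $h^{2k-1}$ discrepancy for the $f_i$ term is well taken, but the fix you sketch (thin-layer support plus an inverse trace inequality) is not what the paper does explicitly; the paper simply writes $h_i^{2k-1}$ when citing \eqref{eqn: Extension Error}.
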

\begin{proof}
	For notational convenience, we will conform to the notation used in Appendix \ref{sec: Derivation}. 
	We begin our error analysis by seeing that $z_{h,i}$ may be written in the form
\begin{equation*}
	z_{h,i} = \HHH^c_{h,i} \iota_h^{(i)} + \GGG^c_{h,i} \kappa_{h,i}.
\end{equation*}
Then, it follows from \eqref{eqn: Dirichlet Subproblem Stability 1} and Proposition \ref{prop: trace norm equivalence} that
\begin{equation} \label{eqn: Discrete Error Bound 1}
\begin{aligned}
	\lno z_{h,i} \rno_{1, \Omega_{h,i}} &\leq C \lp \lno \iota_h^{(i)} \rno_{1/2, \Gamma^c_{h,i}} + \lno \kappa_{h,i} \rno_{-1, \Omega_{h,i}} \rp \\
	&\leq C \lp \lno \iota_h \rno_{1/2, \Gamma^c_{h,1}} + h_i^{2k-1}\lno f_i \rno_{-1, \Omega_i} + h_i^k\lno u_i\rno_{k+1, \Omega_i} \rp,
\end{aligned}
\end{equation}
after seeing that
\begin{equation} \label{eqn: Kappa Bound}
	\lno \kappa_{h,i} \rno_{-1, \Omega_{h,i}} \leq C\lp h_i^{2k-1}\lno f_i \rno_{-1, \Omega_i} + h_i^k\lno u_i\rno_{k+1, \Omega_i} \rp,
\end{equation}
by virtue of \eqref{eqn: Extension Error} and \eqref{eqn: PE-FEM Error 2}.

We now derive a bound for $\omega_h$. From \eqref{eqn: Separated Equations} we have that
\begin{equation*}
\begin{aligned}
	&\left< \omega_h, \mu_2 \right>_{\Gamma^c_{h,2}}\\
	&\quad= B^2_{h,N}\lp z_{h,2}, \RRR^c_{h,2} \mu_2 \rp - \left<\widetilde f_2 - \widehat f_2, \RRR^c_{h,2}\mu_2 \right>_{\Omega_{h,2}} \\
	&\quad + a_{h,2} \lp e_{p,2}, \RRR^c_{h,2} \mu_2 \rp - \left< E^\prime_{p,2}, \mu_2 \right>_{\Gamma^c_{h,2}} \\
	&\quad \leq C\lp \lno \iota_{h} \rno_{1/2, \Gamma^c_{h,1}} + h_2^{2k-1}\lno f_2 \rno_{k-1, \Omega_i} + h_2^{k-\frac12} \lno u_2 \rno_{k+1, \Omega_2} \rp \lno \mu_2 \rno_{1/2, \Gamma^c_{h,2}},
\end{aligned}
\end{equation*}
after applying \eqref{eqn: Neumann Subproblem Boundedness}, \eqref{eqn: Flux Trace Error 1} of Lemma \ref{lemma: Trace Derivative Error}, \eqref{eqn: Extension Error}, and \eqref{eqn: Discrete Error Bound 1}. It then follows from the definition of the dual norm that
\begin{equation} \label{eqn: Omega Bound}
	\lno \omega_h \rno_{-1/2, \Gamma^c_{h,2}} \leq  C\lp \lno \iota_{h} \rno_{1/2, \Gamma^c_{h,1}} + h_2^{2k-1}\lno f_2 \rno_{k-1, \Omega_i} + h_2^{k-\frac12} \lno u_2 \rno_{k+1, \Omega_2} \rp.
\end{equation}

Next, we derive a bound for $\iota_h$. Let us set $\mu_1 = \iota_h$, $\mu_2 = \pi^c_2 \iota_h$, $\RRR^c_{h,i}\lp \cdot \rp = \widehat \HHH^c_{h,i} \lp \cdot \rp$ in \eqref{eqn: Discrete Error Equations}. 
It follows that the third equation of \eqref{eqn: Discrete Error Equations} can be written as
\begin{equation}\label{eqn: Third Equation Decomposed}
\begin{aligned}
	\sum_{i=1,2} B^i_{h,N} \lp \HHH^c_{h,i} \iota^{(i)}_h, \widehat \HHH^c_{h,i} \iota^{(i)}_h \rp + \left< \omega_h, \JJJ^c_{1, (2)} \iota_h^{(2)} - \pi^c_2\iota_h^{(2)} \right>_{\Gamma^c_{h,2}}\\
	= \MMM_h\lp \iota_h^{(2)}, \pi^c_2\iota_h^{(2)} \rp - \sum_{i=1,2} B^i_{h,N} \lp \GGG^c_{h,i} \kappa_{h,i}, \widehat \HHH^c_{h,i} \iota^{(i)}_h  \rp.
\end{aligned}
\end{equation}
First, we have that
\begin{equation} \label{eqn: BN Bound}
	B^i_{h,N} \lp \GGG^h_{h,i} \kappa_i, \widehat \HHH^c_{h,i} \iota_h^{(i)} \rp \leq C\lp h_i^{2k-1} \lno \widetilde f_i \rno_{k-1, \Omega_i} + h_i^{k} \lno u_i \rno_{k+1, \Omega_i} \rp,
\end{equation}
by \eqref{eqn: Neumann Subproblem Boundedness 2}, \eqref{eqn: Dirichlet Subproblem Stability 1}, and \eqref{eqn: Kappa Bound} . Then, utilizing \eqref{eqn: Jacobian Bound}, Proposition \ref{prop: trace norm equivalence} and the projection theorem, implies that
\begin{equation} \label{eqn: Perturbation Bound}
\begin{aligned}
	&\left< \omega_h, \JJJ^c_{1,(2)} \iota_h^{(2)} - \pi^c_2 \iota_h^{(2)} \right>_{\Gamma^c_{h,2}}\\
	&\quad = \left< \rho_h - F_{p,2}, \JJJ^c_{1,(2)} \iota_h^{(2)} - \pi^c_2 \iota_h^{(2)} \right>_{\Gamma^c_{h,2}} \\
	&\quad=\left< \rho_h - \pi^c_2 F_{p,2}, \lp \JJJ^c_{1,(2)}-1\rp\iota_h^{(2)}\right>_{\Gamma^c_{h,2}} + \left< \pi^c_2 F_{p,2} - F_{p,2}, \JJJ^c_{1,(2)}\iota_h^{(2)} \right>_{\Gamma^c_{h,2}} \\
	&\quad=\left<\omega_h, \lp\JJJ^c_{1,(2)} - 1\rp \iota_h^{(2)} \right>_{\Gamma^c_{h,2}} + \left< \pi^c_2 F_{p,2} - F_{p,2}, \iota_h^{(2)} - \pi^c_2 \iota_h^{(2)} \right>_{\Gamma^c_{h,2}} \\
	&\quad\leq C\lp h \lno \iota_h \rno_{1/2, \Gamma^c_{h,1}} + h^{2k} \lno f_2 \rno_{k-1, \Omega_2} + h^k_2 \lno u_2 \rno_{k+1, \Omega_2} \rp \lno \iota_h \rno_{1/2, \Gamma^c_{h,1}},
\end{aligned}
\end{equation}
since
 $$\lno \iota_h^{(2)} - \pi^c_2 \iota_h^{(2)} \rno_{0, \Gamma^c_{h,2}} \leq Ch_2^\frac12 \lno \iota_h \rno_{1/2, \Gamma^c_{h,1}},$$
from standard approximation arguments, and 
\begin{equation*}
\begin{aligned}
	\lno \pi^c_2 F_{p,2} - F_{p,2} \rno_{0, \Gamma^c_{h,2}} &=
		\lno \pi^c_2 F_{p,2} - F_2 \rno_{0, \Gamma^c_{h,2}} 
		+ \lno E_{p,2} \rno_{0, \Gamma^c_{h,2}}\\
		&= \lno \pi^c_2 E_{p,2} \rno_{0, \Gamma^c_{h,2}} + \lno \pi^c_2 F_2 - F_2 \rno_{0, \Gamma^c_{h,2}} + \lno E_{p,2} \rno_{0, \Gamma^c_{h,2}} \\
		&\leq C h^{k-\frac12}\lno u_2 \rno_{k+1, \Omega_2},
\end{aligned}
\end{equation*}
after applying \eqref{eqn: Flux Trace Error 2} of Lemma \ref{lemma: Trace Derivative Error}, \cite[Appendix A, Lemma 2]{PE-FEM} with $\delta_{h,2} \sim \mathcal O\lp h_2^2 \rp$, and seeing that $F_2 \in H^{k-\frac12}\lp \Gamma^c_{h,2} \rp$. 
Next, we proceed to bound $\MMM_h\lp \iota_h, \pi^c_2 \iota_h^{(2)} \rp$; let us denote
\begin{equation*}
	\widehat E_{p,2} := \widetilde p_2\circ \etab^c_2 \lp \xib^c_2 \rp \left.\Tb^{k-1}_{h,2} \lp \nabla e_{p,2} \rp\right|_{\etab^c_2 \lp \xib^c_2 \rp} \cdot \nn_2,
\end{equation*}
then the second term in $\MMM_h\lp \iota_h, \pi^c_2 \iota_h^{(2)}\rp$ can be bounded as follows
\begin{equation*}
\begin{aligned}
	&\left< \widehat E_{p,2}, \JJJ^c_{1,(2)}\iota_h^{(2)} - \pi^c_2 \iota_h^{(2)} \right>_{\Gamma^c_{h,2}}\\
	&\quad\leq \lno \widehat E_{p,2} \rno_{0, \Gamma^c_{h,2}} \lp \lno \iota_h^{(2)} - \pi^c_2 \iota_h^{(2)} \rno_{0, \Gamma^c_{h,2}} + \lno \lp \JJJ^c_{1,(2)} - 1 \rp \iota_h^{(2)} \rno_{1/2, \Gamma^c_{h,2}} \rp \\
	&\quad\leq C h^{k} \lno u_2\rno_{k+1, \Omega_2}\lno \iota_h \rno_{1/2, \Gamma^c_{h,1}},
\end{aligned}
\end{equation*}
after applying \eqref{eqn: Flux Trace Error 1} of Lemma \ref{lemma: Trace Derivative Error}, \eqref{eqn: Jacobian Bound}, and a standard approximation argument. 
Subsequently, we have that
\begin{equation*} 
	B^i_{h,N}\lp e_{p,i}, \widehat \HHH^c_{h,i} \iota_h^{(i)} \rp \leq 
		Ch_i^k \lno u_i \rno_{k+1, \Omega_i},
\end{equation*}
by virtue of Lemma \ref{lemma: Tau Bounds} and \eqref{eqn: PE-FEM Error 1}. It then follows easily from \cite[Appendix A, Lemma 2]{PE-FEM}, \eqref{eqn: Extension Error}, \eqref{eqn: Neumann Subproblem Boundedness}, and \eqref{eqn: PE-FEM Error 1} that
\begin{equation} \label{eqn: M Bound}
	\MMM_h\lp \iota_h, \pi^c_2 \iota_h^{(2)} \rp \leq C\sum_{i=1,2} \lp h_i^{2k-1} \lno f_i \rno_{k-1, \Omega_i} + h^{k} \lno u_i \rno_{k+1, \Omega_i} \rp\lno \iota_h \rno_{1/2, \Gamma^c_{h,1}}. 
\end{equation}	
Finally, we have that 
\begin{equation*}
\begin{aligned}
B^i_{h,N} \lp \HHH^c_{h,i} \iota^{(i)}_h, \widehat \HHH^c_{h,i} \iota^{(i)}_h \rp &= B^i_{h,N} \lp \HHH^c_{h,i} \iota^{(i)}_h, \HHH^c_{h,i} \iota^{(i)}_h \rp\\ 
&\quad+ B^i_{h,N} \lp \HHH^c_{h,i} \iota^{(i)}_h, \lp \HHH^c_{h,i} - \widehat \HHH^c_{h,i}\rp \iota^{(i)}_h \rp \\
&\geq C(\gamma_{N,i} - h_i)\lno \iota_{h} \rno^2_{1/2, \Gamma^c_{h,1}},
\end{aligned} 
\end{equation*}
from Lemma \ref{eqn: Perturbation Operator Difference}, \eqref{eqn: Neumann Subproblem Coercivity}, \eqref{eqn: Harmonic Lower Bound}, and \eqref{eqn: Neumann Subproblem Boundedness}. The above bound together with \eqref{eqn: Perturbation Bound}, \eqref{eqn: M Bound}, and \eqref{eqn: BN Bound} yields the following
\begin{equation}\label{eqn: Iota Bound}
	\lno \iota_h \rno_{1/2, \Gamma^c_{h,i}} \leq C \sum_{i=1,2} \lp h^k \lno u_i \rno_{k+1, \Omega_i} + h^{2k-1} \lno f_i \rno_{k-1, \Omega_i} \rp.
\end{equation}

The proof is therefore concluded by recalling \eqref{eqn: Discrete Error Bound 1}, 
\begin{equation*}
	\lno \widetilde u_i - u_{h,i} \rno_{1, \Omega_{h,i}} \leq \lno e_{p,i} \rno_{1, \Omega_{h,i}} + \lno z_{h,i} \rno_{1, \Omega_{h,i}},
\end{equation*}
and applying \eqref{eqn: PE-FEM Error 1}, \eqref{eqn: Discrete Error Bound 1}, and \eqref{eqn: Iota Bound}.
\end{proof}
\section{Numerical Illustration} \label{section: Numerical Illustration}
In this section, we present an illustrative numerical example for this coupling approach. We consider the interface problem where $\Omega_1$ is a disk of radius $\frac14$ and $\Omega_2$ is an annulus with an inner radius of $\frac14$ and an outer radius of $\frac12$.  $\Omega_{h,1}$ and $\Omega_{h,2}$ are discretized so that each element on $\Gamma^c_{h,1}$ and $\Gamma^c_{h,2}$ respectively have the same length. Additionally, we fix the ratio of elements between $\Gamma^c_{h,1}$ and $\Gamma^c_{h,2}$ to be $1:1$, $1:2$, and $2:1$. The manufactured solution we consider is the following
\begin{equation*}
\left\{
\begin{aligned}
u_1 &= e^{-5(x^2 + y^2)} \\
u_2 &= \frac{e^{-5(x^2 + y^2)} + e^{-\frac{5}{16}}}{2},
\end{aligned}
\right.
\end{equation*}
This solution corresponds to \eqref{eqn: Elliptic Interface Problem} with $p_1 = 1$, $p_2 = 2$, $f_1 = f_2 = -100(x^2 + y^2) + 20e^{-5(x^2 + y^2)}$. The numerical solutions are computed using quadratic, cubic, and quartic Lagrange elements on triangles.  We present the convergence histories in the following tables.
\begin{table}[h!]
\begin{tabular}{|c|c|c|c|c|c|c|}
\hline
\multicolumn{7}{|c|}{ $1:1$ Ratio}\\
\hline
& \multicolumn{2}{|c|}{Quadratic} & \multicolumn{2}{|c|}{Cubic} & \multicolumn{2}{|c|}{Quartic}\\
\hline
$h$ & $L^2$--Error & $H^1$--Error & $L^2$--Error & $H^1$--Error & $L^2$--Error & $H^1$--Error \\
\hline
0.3827 & 4.7000E-3 & 5.7280E-2 & 4.9120E-4 & 7.33919E-3 & 1.7986E-4& 1.4585E-3 \\
0.2199 & 6.4000E-4 & 1.5427E-2 & 1.7007E-5 & 1.04039E-3 & 5.1535E-6 & 9.0318E-5 \\
0.1200 & 7.6813E-5 & 3.6798E-3 & 1.0265E-6 & 1.29739E-4 & 1.2813E-7 & 5.3379E-6 \\
0.0700 & 1.0906E-5 & 1.0265E-3 & 9.1517E-8 & 2.03711E-5 & 5.4741E-9 & 4.0525E-7 \\
0.0349 & 1.3666E-6 & 2.5706E-4 & 5.6586E-9 & 2.55939E-6 & &\\
\hline
Rate &  3.424 & 2.274 & 4.699 & 3.340 &  6.120 & 4.803 \\
\hline 
\end{tabular}
\begin{tabular}{|c|c|c|c|c|c|c|}
\hline
\multicolumn{7}{|c|}{ $1:2$ Ratio}\\
\hline
& \multicolumn{2}{|c|}{Quadratic} & \multicolumn{2}{|c|}{Cubic} & \multicolumn{2}{|c|}{Quartic}\\
\hline
$h$ & $L^2$--Error & $H^1$--Error & $L^2$--Error & $H^1$--Error & $L^2$--Error & $H^1$--Error \\
\hline
0.2635 & 2.7383E-3 & 2.8487E-2 & 3.0674E-4 & 3.5639E-3 & 8.4549E-5 & 5.4429E-4 \\
0.1458 & 3.6003E-4 & 8.0675E-3 & 6.5463E-6 & 4.5505E-4 & 2.5055E-6 & 2.7702E-5 \\
0.0789 & 4.0748E-5 & 1.9102E-3 & 3.6973E-7 & 4.7691E-5 & 4.3316E-8 & 1.3256E-6 \\ 
0.0413 & 4.8422E-6 & 4.7847E-4 & 4.0131E-8 & 6.3799E-6 & 4.1001E-10& 8.1199E-8 \\
0.0201 & 4.7705E-7 & 1.1875E-5 & 2.2039E-9 & 8.3134E-7 & & \\
\hline
Rate &  3.370 & 2.148 & 4.472 & 3.268 &  6.608 & 4.769 \\
\hline 
\end{tabular}
\begin{tabular}{|c|c|c|c|c|c|c|}
\hline
\multicolumn{7}{|c|}{ $2:1$ Ratio}\\
\hline
& \multicolumn{2}{|c|}{Quadratic} & \multicolumn{2}{|c|}{Cubic} & \multicolumn{2}{|c|}{Quartic}\\
\hline
$h$ & $L^2$--Error & $H^1$--Error & $L^2$--Error & $H^1$--Error & $L^2$--Error & $H^1$--Error \\
\hline
0.3827 & 3.5786E-3 & 4.9183E-2 & 2.8360E-4 & 6.1253E-3 & 1.0073E-4 & 1.2116E-3 \\
0.2190 & 4.7034E-4 & 1.3388E-2 & 1.6031E-5 & 9.4284E-4 & 2.5941E-6 & 8.3794E-5 \\
0.1200 & 5.6706E-5 & 3.2844E-3 & 1.0048E-6 & 1.2237E-4 & 6.6622E-8 & 5.1549E-6 \\ 
0.0670 & 8.5156E-6 & 9.4006E-4 & 9.0032E-8 & 1.9515E-5 & 3.3644E-9 &3.9700E-7 \\
0.0348 & 1.0876E-6 & 2.3633E-4 & 5.5777E-9 & 2.4412E-6 & &  \\
\hline
Rate &  3.398 & 2.242 & 4.515 & 3.286 &  6.066 &  4.709 \\
\hline 
\end{tabular}
\caption{Convergence histories of numerical tests. We observe optimal convergence in the broken $L^2$ and $H^1$ norms.}
\end{table}
\section{Concluding Remarks} \label{section: Concluding Remarks}
In this paper, we have presented a new method, based on PE--FEM, for coupling numerical solutions together on geometrically nonmatching discrete interface approximations. Stability and optimal $H^1$ error bounds was proven, and the numerical illustration confirms our theoretical findings. Additionally, the numerical results imply that our method is optimally convergent in $L^2$--as well. In future works, we will apply and analyze generalizations of this coupling approach to other interface phenomena, such as groundwater flows and fluid--structure interaction. 
\appendix
\section{Technical Lemmas}
\begin{lemma} \label{lemma: Trace Derivative Error}
Let $e_{p,i} \in H^1(\Omega_{h,i}), H^{k+1}\lp \KKK_{h,i} \rp$ be defined as in Section \ref{section: Error Analysis}, then the following bounds are satisfied
\begin{equation} \label{eqn: Flux Trace Error 1}
	\lno \widetilde p_i \lp \xib^c_i \rp \nabla e_{p,i} \cdot \nn_{h,i} \rno_{0, \Gamma^c_{h,i}} \leq C\lno \widetilde p_i \rno_{C^0\lp \overline{\Omega_{h,i}} \rp}h_i^{k-\frac12} \lno u_i \rno_{k+1, \Omega_i}
\end{equation}
\begin{equation} \label{eqn: Flux Trace Error 2}
	\lno \widetilde p_i \circ \etab^c_i\lp \xib^c_i \rp \left.\Tb^{k-1}_{h,i} \lp\nabla e_{p,i}\rp\right|_{\etab^c_i\lp \xib^c_i \rp} \cdot \nn_{i} \rno_{0, \Gamma^c_{h,i}} \leq Ch_i^{k-\frac12} \lno u_i \rno_{k+1, \Omega_i}.
\end{equation}
\end{lemma}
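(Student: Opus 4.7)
The plan is to reduce both bounds to the PE--FEM a priori estimate \eqref{eqn: PE-FEM Error 1} by means of scaled trace inequalities on the elements adjacent to $\Gamma^c_{h,i}$, and in the second bound to exploit the smallness $\lsn \etab^c_i(\xib^c_i) - \xib^c_i\rsn = \mathcal O\lp h_i^2 \rp$ supplied by \eqref{distance assumption}.

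For \eqref{eqn: Flux Trace Error 1}, on each boundary edge $\EEE^j_{h,i}\subset \KKK^j_{h,i}$ I would invoke the standard scaled trace inequality
\begin{equation*}
\lno w\rno_{0, \EEE^j_{h,i}}^2 \leq C\lp h_i^{-1}\lno w\rno_{0,\KKK^j_{h,i}}^2 + h_i\lno \nabla w\rno_{0,\KKK^j_{h,i}}^2\rp
\end{equation*}
componentwise to $w = \partial_\ell e_{p,i}$. Bounding $\widetilde p_i$ pointwise in $C^0\lp\overline{\Omega_{h,i}}\rp$, summing over all boundary edges, and invoking \eqref{eqn: PE-FEM Error 1} at $m=1$ and $m=2$ produces
\begin{equation*}
\trino \widetilde p_i\lp\xib^c_i\rp\nabla e_{p,i}\cdot\nn_{h,i}\trino_{0,\Gamma^c_{h,i}}^2 \leq C\lno \widetilde p_i\rno_{C^0\lp\overline{\Omega_{h,i}}\rp}^2\lp h_i^{-1} h_i^{2k} + h_i\, h_i^{2k-2}\rp\lsn u_i\rsn_{k+1,\Omega_i}^2,
\end{equation*}
and taking square roots yields the first claim.

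For \eqref{eqn: Flux Trace Error 2}, I would split by linearity $\Tb^{k-1}_{h,i}\lp\nabla e_{p,i}\rp = \Tb^{k-1}_{h,i}\lp\nabla \widetilde u_i\rp - \Tb^{k-1}_{h,i}\lp\nabla u_{p,i}\rp$. Applying \eqref{Taylor on Gamma_h} componentwise to $\nabla\widetilde u_i\in \lp H^k\lp\mathbb R^\dimension\rp\rp^\dimension$, together with the averaged-Taylor remainder bound from \cite[Appendix A, Lemma 2]{PE-FEM}, \eqref{extension bounds}, and $\delta_{h,i}\sim h_i^2$, gives
\begin{equation*}
\left.\Tb^{k-1}_{h,i}\lp\nabla \widetilde u_i\rp\right|_{\etab^c_i(\xib^c_i)} = \nabla \widetilde u_i\circ \etab^c_i(\xib^c_i) - \left.\Rb^{k-1}_{h,i}\lp\nabla\widetilde u_i\rp\right|_{\etab^c_i(\xib^c_i)},
\end{equation*}
with the remainder bounded in $L^2\lp\Gamma^c_{h,i}\rp$ by $Ch_i^{2k-1}\lsn u_i\rsn_{k+1,\Omega_i}$, which is already of strictly higher order than the target $h_i^{k-\frac12}$. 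Since $\nabla u_{p,i}|_{\KKK^j_{h,i}}$ is a polynomial of degree $k-1$, \eqref{Taylor discrete} identifies $\Tb^{k-1}_{h,i}(\nabla u_{p,i})|_{\etab^c_i(\xib^c_i)}$ with the exact polynomial extrapolation from $\KKK^j_{h,i}$ to $\etab^c_i(\xib^c_i)$. Taylor-expanding also $\nabla\widetilde u_i$ about $\xib^c_i$ and combining produces
\begin{equation*}
\left.\Tb^{k-1}_{h,i}\lp\nabla e_{p,i}\rp\right|_{\etab^c_i(\xib^c_i)} = \nabla e_{p,i}\lp\xib^c_i\rp + \sum_{1\leq|\alphab|\leq k-1}\frac{1}{\alphab!}D_h^\alphab\nabla e_{p,i}\lp\xib^c_i\rp\lp\etab^c_i(\xib^c_i)-\xib^c_i\rp^\alphab + \mathcal O\lp h_i^{2k-1}\lsn u_i\rsn_{k+1,\Omega_i}\rp.
\end{equation*}
The leading term is controlled exactly as in the proof of \eqref{eqn: Flux Trace Error 1}. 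Each correction indexed by $|\alphab|\geq 1$ carries the factor $\delta_{h,i}^{|\alphab|}\sim h_i^{2|\alphab|}$; a scaled trace inequality applied at order $m = |\alphab|+1$, combined with \eqref{eqn: PE-FEM Error 1}, bounds it by $Ch_i^{2|\alphab|}\bigl(h_i^{-\frac12}h_i^{k-|\alphab|} + h_i^{\frac12}h_i^{k-|\alphab|-1}\bigr)\lsn u_i\rsn_{k+1,\Omega_i} = \mathcal O\lp h_i^{k+|\alphab|-\frac12}\rp$, strictly subordinate to $h_i^{k-\frac12}$ for every $|\alphab|\geq 1$. Multiplying by $\lno\widetilde p_i\rno_{C^0}$ and summing completes \eqref{eqn: Flux Trace Error 2}.

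The main obstacle is the Taylor-expansion bookkeeping for the second bound: one must reconcile the averaged-Taylor construction of Section \ref{subsection: Averaged Taylor Series} (needed for the smooth factor $\nabla\widetilde u_i$) with the exact polynomial-extrapolation behavior of $\Tb^{k-1}_{h,i}$ on $\nabla u_{p,i}$, and verify that the geometric scaling $\delta_{h,i}\sim h_i^2$ is sharp enough to absorb all higher-order Taylor corrections into the claimed trace power $h_i^{k-\frac12}$.
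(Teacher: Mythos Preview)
Your argument is correct. For \eqref{eqn: Flux Trace Error 1} you and the paper do exactly the same thing: pull out $\lno\widetilde p_i\rno_{C^0}$, apply a scaled trace inequality to $\nabla e_{p,i}$, and insert \eqref{eqn: PE-FEM Error 1} at $m=1,2$. The paper writes this with a multiplicative trace bound $\lno\nabla e_{p,i}\rno_{0,\Gamma^c_{h,i}}\leq C\lno e_{p,i}\rno_{1,\Omega_{h,i}}^{1/2}\trino e_{p,i}\trino_{2,\Omega_{h,i}}^{1/2}$ rather than your additive form, but the outcome is identical.

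For \eqref{eqn: Flux Trace Error 2} you take a genuinely different route. The paper's proof is a one--line reduction: it writes
\[
\lno \Tb^{k-1}_{h,i}(\nabla e_{p,i})|_{\etab^c_i}\rno_{0,\Gamma^c_{h,i}} \le \lno \Tb^{k-1}_{h,i}(\nabla e_{p,i})|_{\etab^c_i}-\nabla e_{p,i}\rno_{0,\Gamma^c_{h,i}} + \lno \nabla e_{p,i}\rno_{0,\Gamma^c_{h,i}},
\]
handles the second term by part \eqref{eqn: Flux Trace Error 1}, and disposes of the first term in a single citation of \cite[Appendix A, Lemma 3]{PE-FEM}, which bounds the Taylor correction $\Tb^{k-1}_{h,i}(\nabla v)|_{\etab}-\nabla v$ on $\Gamma^c_{h,i}$ directly by $h_i\trino v\trino_{2,\Omega_{h,i}}+h_i^{k-1}\trino v\trino_{k+1,\Omega_{h,i}}$ for piecewise $H^{k+1}$ functions. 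Your approach instead opens up the Taylor operator: you split $e_{p,i}=\widetilde u_i-u_{p,i}$ so as to separate the averaged--Taylor remainder (for the smooth part) from the exact polynomial extrapolation (for the discrete part), and then bound each multi--index term $D_h^\alphab\nabla e_{p,i}$ individually via a scaled trace inequality and \eqref{eqn: PE-FEM Error 1}. This is more laborious but entirely self--contained; the paper's version is shorter only because it delegates the whole Taylor--difference estimate to an external lemma. One small point: after your recombination you actually carry two remainder pieces (the averaged $\Rb^{k-1}_{h,i}(\nabla\widetilde u_i)$ and the pointwise Taylor remainder from re--expanding $\nabla\widetilde u_i$ about $\xib^c_i$), both of size $\mathcal O(h_i^{2k-1})$; your write--up collapses them into a single $\mathcal O(h_i^{2k-1})$, which is harmless but worth stating explicitly.
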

\begin{proof}
First, we have that
\begin{equation*}
\begin{aligned}
	\lno \widetilde p_i \lp \xib^c_i \rp \nabla e_{p_i} \cdot \nn_{h,i} \rno_{0, \Gamma^c_{h,i}} &\leq C \lno \widetilde p_i \rno_{C^0\lp \overline{\Omega_{h,i}} \rp} \lno \nabla e_{p,i} \rno_{0, \Gamma^c_{h,i}} \\
	&\leq C\lno \widetilde p_i \rno_{C^0\lp \overline{\Omega_{h,i}} \rp} \lno e_{p,i} \rno_{1, \Omega_{h,i}} \trino e_{p,i} \trino_{2, \Omega_{h,i}} \\
	&\leq Ch^{k-1/2}_i\lno \widetilde p_i \rno_{C^0\lp \overline{\Omega_{h,i}} \rp} \lno u_u \rno_{k+1, \Omega_i},
\end{aligned}
\end{equation*}
by virtue of \eqref{eqn: PE-FEM Error 1}, thus we have \eqref{eqn: Flux Trace Error 1}.

Next, we have that
\begin{equation*}
\lno \widetilde p_i \circ \etab^c_i\lp \xib^c_i \rp \left.\Tb^{k-1}_{h,i} \lp\nabla e_{p,i}\rp\right|_{\etab^c_i\lp \xib^c_i \rp} \cdot \nn_{i} \rno_{0, \Gamma^c_{h,i}} \leq \lno\widetilde p_i  \rno_{C^0\lp\overline{\Omega_{h,i}}\rp} \lno \left.\Tb^{k-1}_{h,i} \lp\nabla e_{p,i}\rp\right|_{\etab^c_i\lp \xib^c_i\rp}\rno_{0, \Gamma^c_{h,i}}.
\end{equation*}
It then follows that
\begin{equation*}
\begin{aligned}
	\lno \left.\Tb^{k-1}_{h,i} \lp \nabla e_{p,i} \rp \right|_{\etab^c_i\lp\xib^c_i\rp} \rno_{0, \Gamma^c_{h,i}} &\leq 	\lno \left.\Tb^{k-1}_{h,i} \lp \nabla e_{p,i} \rp \right|_{\etab^c_i\lp\xib^c_i\rp} - \nabla e_{p.i}\rno_{0, \Gamma^c_{h,i}} + \lno e_{p,i} \rno_{0, \Gamma^c_{h,i}}\\
	&\leq C \lp h_i \trino e_{p,i} \trino_{2, \Omega_{h,i}} + h^{k-1}_i \trino e_{p,i} \trino_{k+1, \Omega_{h,i}} \rp + \lno \nabla e_{p,i} \rno_{0, \Gamma^c_{h,i}} \\
	&\leq C h_i^{k-\frac12} \lno u_i \rno_{k+1, \Omega_i},
\end{aligned}
\end{equation*}
by virtue of \cite[Appendix A, Lemma 3]{PE-FEM}. Thus we have \eqref{eqn: Flux Trace Error 2}. 
\end{proof}
\begin{lemma} \label{lemma: Tau Bounds}
	Let $v\in L^2(\Omega_{h,i}), H^{k+1}(\KKK_{h,i})$ and assume that $\delta_{h,i} \sim O(h_i^2)$, then for $k=1,2,\ldots$
\begin{equation} \label{eqn: Tau Bound 1}
\begin{aligned}
	\tau_i\lp v,\mu \rp \leq C \lno \widetilde p_i \rno_{C^1\lp \overline{\Omega_i \cap \Omega_{h,i}} \rp} \lp h_i \trino v \trino_{2, \Omega_{h,i}} + h_i^{2k-1} \trino v \trino_{k+1, \Omega_{h,i}} \rp \lno \mu \rno_{1/2, \Gamma^c_{h,i}}\\
	 \quad \forall \mu \in H^{1/2}(\Gamma^c_{h,i}).
\end{aligned}
\end{equation}
\end{lemma}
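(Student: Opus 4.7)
The plan is to decompose the integrand of $\tau_i(v,\mu)$ into three pieces by adding and subtracting intermediate quantities, bound each in $L^2(\Gamma^c_{h,i})$, and then pair with $\mu$ via Cauchy--Schwarz together with the continuous embedding $\lno \mu \rno_{0,\Gamma^c_{h,i}} \leq C\lno \mu \rno_{1/2,\Gamma^c_{h,i}}$. Adding and subtracting the intermediates $\widetilde p_i\circ\etab^c_i\,\nabla v|_{\xib^c_i}\cdot\nn_i$ and $\widetilde p_i(\xib^c_i)\,\nabla v|_{\xib^c_i}\cdot\nn_i$ yields the pointwise splitting
\begin{equation*}
	\widetilde p_i\circ\etab^c_i\,\left.\Tb^{k-1}_{h,i}\nabla v\right|_{\etab^c_i\lp\xib^c_i\rp}\cdot\nn_i - \widetilde p_i\,\nabla v\cdot\nn_{h,i} = A + B + C,
\end{equation*}
with $A := \widetilde p_i\circ\etab^c_i \lp \left.\Tb^{k-1}_{h,i}\nabla v\right|_{\etab^c_i\lp\xib^c_i\rp} - \nabla v \rp \cdot\nn_i$, $B := \lp \widetilde p_i\circ\etab^c_i - \widetilde p_i \rp \nabla v\cdot\nn_i$, and $C := \widetilde p_i\,\nabla v\cdot(\nn_i - \nn_{h,i})$.

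For $A$, I further write $\left.\Tb^{k-1}_{h,i}\nabla v\right|_{\etab^c_i} - \nabla v|_{\xib^c_i} = [\nabla v(\etab^c_i) - \nabla v(\xib^c_i)] - \left.\Rb^{k-1}_{h,i}\nabla v\right|_{\etab^c_i}$, interpreting $\nabla v$ as a well-defined object on a neighborhood of $\Omega_{h,i}$ via Sobolev extension on the smooth part and the averaged Taylor framework of \S\ref{subsection: Averaged Taylor Series} on each star-shaped patch $S^{j,j^\prime}_{h,i}$. The remainder is controlled by [PE-FEM, Appendix A, Lemma 2] applied at order $k-1$ to the vector field $\nabla v$:
\begin{equation*}
	\lno \left.\Rb^{k-1}_{h,i}\nabla v\right|_{\etab^c_i\lp\xib^c_i\rp} \rno_{0,\Gamma^c_{h,i}} \leq C\delta_{h,i}^{k-\frac12}\trino v \trino_{k+1,\Omega_{h,i}} \leq Ch_i^{2k-1}\trino v \trino_{k+1,\Omega_{h,i}},
\end{equation*}
where the last step uses $\delta_{h,i} \sim \mathcal{O}(h_i^2)$. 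The increment $\nabla v(\etab^c_i) - \nabla v(\xib^c_i)$ I estimate via a strip/Fubini argument: writing $\nabla v(\etab^c_i) - \nabla v(\xib^c_i) = \int_0^1 D^2 v(\xib^c_i + t(\etab^c_i-\xib^c_i))(\etab^c_i-\xib^c_i)\,dt$, squaring, integrating over $\xib^c_i \in \Gamma^c_{h,i}$, and changing variables so that the double integral becomes $\|D^2 v\|^2_{L^2(\text{strip})}$ multiplied by $\delta_{h,i}$ (from the Jacobian of the foliation by normal segments of length $\leq \delta_{h,i}$), gives
\begin{equation*}
	\lno \nabla v(\etab^c_i) - \nabla v(\xib^c_i) \rno_{0,\Gamma^c_{h,i}} \leq C\delta_{h,i}^{\frac12}\trino v \trino_{2,\Omega_{h,i}} \leq Ch_i\trino v \trino_{2,\Omega_{h,i}}.
\end{equation*}
Together these control $\lno A \rno_{0,\Gamma^c_{h,i}}$ by $C\lno\widetilde p_i\rno_{C^0}(h_i\trino v\trino_{2,\Omega_{h,i}} + h_i^{2k-1}\trino v\trino_{k+1,\Omega_{h,i}})$.

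For $B$, the $C^1$-regularity of $\widetilde p_i$ gives $|\widetilde p_i\circ\etab^c_i - \widetilde p_i| \leq \lno \widetilde p_i \rno_{C^1}\delta_{h,i} \leq Ch_i^2\lno \widetilde p_i \rno_{C^1}$, and the standard trace inequality gives $\lno \nabla v \rno_{0,\Gamma^c_{h,i}} \leq C\trino v \trino_{2,\Omega_{h,i}}$, so $\lno B \rno_{0,\Gamma^c_{h,i}} \leq Ch_i^2\lno \widetilde p_i \rno_{C^1}\trino v \trino_{2,\Omega_{h,i}}$, which is absorbed into the target. For $C$, the $C^{k+1}$-smoothness of $\Gamma_i$ together with the fact that $\Gamma^c_{h,i}$ is the piecewise linear interpolant with vertices on $\Gamma^c$ yields $|\nn_i\circ\etab^c_i - \nn_{h,i}| \leq Ch_i$ along each edge, hence $\lno C \rno_{0,\Gamma^c_{h,i}} \leq Ch_i\lno \widetilde p_i \rno_{C^0}\trino v \trino_{2,\Omega_{h,i}}$. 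Collecting the three bounds and applying Cauchy--Schwarz with $\lno\mu\rno_{0,\Gamma^c_{h,i}} \leq C\lno\mu\rno_{1/2,\Gamma^c_{h,i}}$ gives \eqref{eqn: Tau Bound 1}. The delicate step is the strip estimate for $\nabla v(\etab^c_i) - \nabla v(\xib^c_i)$: since $v$ is only piecewise $H^{k+1}$, one must justify that $\nabla v$ admits a meaningful extension across $\Omega_i\Delta\Omega_{h,i}$ and carefully control, uniformly in $h_i$, the Jacobian of the normal foliation of the thin strip between $\Gamma^c_{h,i}$ and $\Gamma^c$.
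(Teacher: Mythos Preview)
Your proposal is correct and follows the same three-term decomposition $A+B+C$ as the paper, with identical treatment of $B$ (Taylor's theorem on $\widetilde p_i$, then trace inequality) and $C$ (normal deviation $|\nn_i-\nn_{h,i}|\le Ch_i$). The only difference is in term $A$: the paper invokes \cite[Appendix A, Lemma 3]{PE-FEM} as a black box to obtain
\[
\lno \widetilde p_i\circ\etab^c_i\,\bigl(\left.\Tb^{k-1}_{h,i}\nabla v\right|_{\etab^c_i(\xib^c_i)}-\nabla v(\xib^c_i)\bigr)\cdot\nn_i \rno_{0,\Gamma^c_{h,i}}
\le C\lno\widetilde p_i\rno_{C^0}\bigl(h_i\trino v\trino_{2,\Omega_{h,i}}+h_i^{2k-1}\trino v\trino_{k+1,\Omega_{h,i}}\bigr),
\]
whereas you unpack this into a remainder piece (handled by Lemma~2 of the same appendix) plus an increment $\nabla v(\etab^c_i)-\nabla v(\xib^c_i)$ controlled by a strip/Fubini estimate. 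Your route is essentially a sketch of what Lemma~3 proves; the delicate extension issue you flag (making sense of $\nabla v$ across $\Omega_i\Delta\Omega_{h,i}$ when $v$ is only piecewise $H^{k+1}$) is exactly the content that lemma packages, via the element-wise star-shaped patches $S^{j,j'}_{h,i}$. So the two proofs coincide in structure and differ only in how much of the PE--FEM machinery is quoted versus re-derived.
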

\begin{proof}
	The definition of $\tau_i\lp\cdot, \cdot \rp$, see \eqref{eqn: Tau Definition}, and a triangle inequality, we have that
\begin{equation*}
\begin{aligned}
	\tau_i\lp v, \mu \rp &\leq \lno \mu \rno_{1/2, \Gamma^c_{h,i}}
		\bigg(
		\lno \widetilde p_i\circ\etab^c_i\lp \xib^c_i \rp \lp\left.\Tb^{k-1}_{h,i} \nabla v\right|_{\etab^c_i\lp\xib^c_i\rp} - \nabla v\lp\xib^c_i\rp\rp \cdot \nn_i \rno_{0, \Gamma^c_{h,i}}\\
		&\qquad+ \lno \lp\widetilde p_i\circ\etab^c_i\lp\xib^c_i\rp - \widetilde p_i\lp\xib^c_i\rp \rp  \nabla v\lp \xib^c_i \rp\cdot \nn_i\rno_{0, \Gamma^c_{h,i}} \\
		&\qquad+\lno \widetilde p_i\lp\xib^c_i\rp \nabla v\lp\xib^c_i\rp\cdot\lp \nn_i - \nn_{h,i}\rp \rno_{0, \Gamma^c_{h,i}}
		 \bigg)
\end{aligned}
\end{equation*}
From \cite[Appendix A, Lemma 3]{PE-FEM} we have that
\begin{equation*}
\begin{aligned}
	&\lno \widetilde p_i\etab^c_i\lp \xib^c_i \rp \lp\left.\Tb^{k-1}_{h,i} \nabla v\right|_{\etab^c_i\lp\xib^c_i\rp} - \nabla v\lp\xib^c_i\rp\rp \cdot \nn_i \rno_{0, \Gamma^c_{h,i}} \\
	&\qquad \leq \lno \widetilde p_i \rno_{C^0(\overline{\Omega_i \cup \Omega_{h,i}})}
		\lp h_i \trino v \trino_{2, \Omega_{h,i}} + h_i^{2k-1}\trino v \trino_{k+1, \Omega_{h,i}} \rp.
\end{aligned}
\end{equation*}
We then also have that
\begin{equation*}
\begin{aligned}
\lno \lp\widetilde p_i\etab^c_i\lp\xib^c_i\rp - \widetilde p_i\lp\xib^c_i\rp \rp  \nabla v\lp \xib^c_i \rp\cdot \nn_i\rno_{0, \Gamma^c_{h,i}} &\leq 
	Ch_i^2\lno \widetilde p_i \rno_{C^1(\overline{\Omega_i \cup \Omega_{h,i}})}\lno \nabla v \rno_{0, \Gamma^c_{h,i}} \\
	&\leq Ch_i^2\lno \widetilde p_i \rno_{C^1(\overline{\Omega_i \cup \Omega_{h,i}})}\trino v \trino_{2, \Omega_{h,i}},
\end{aligned}
\end{equation*}
after applying Taylor's theorem for continuous functions and the trace inequality. Finally, since $\lsn\nn_i - \nn_{h,i}\rsn \sim \mathcal O(h_i)$, we have that
\begin{equation*}
\lno \widetilde p_i\lp\xib^c_i\rp \nabla v\lp\xib^c_i\rp\cdot\lp \nn_i - \nn_{h,i}\rp \rno_{0, \Gamma^c_{h,i}} \leq Ch_i \lno \widetilde p_i \rno_{C^0\lp\overline{\Omega_i \cup \Omega_{h,i}} \rp} \trino v \trino_{2, \Omega_{h,i}}.
\end{equation*}
Hence \eqref{eqn: Tau Bound 1} is established.
\end{proof}
\section{Derivation of \eqref{eqn: Discrete Error Equations}} \label{section: Error Equation Derivation} \label{sec: Derivation}
We begin our derivation by seeing that
\begin{equation*}
\begin{aligned}
	B^i_{h,D}\lp u_{h,i}, v_i \rp &= B^i_{h,D} \lp z_{h,i}, v_i \rp + \theta_{h,i} \left<\left. T^k_{h,i} u_{p,i} \right|_{\etab^c_i\lp \xib^c_i \rp}, v_i \right>_{\Gamma^c_{h,i}}\\
	 &+ \theta_{h,i} \left<\left. T^k_{h,i} u_{p,i} \right|_{\etab^0_i\lp \xib^0_i \rp}, v_i \right>_{\Gamma^0_{h,i}} + \left<L_i u_{p,i}, v_i \right>_{\Omega_{h,i}} \quad \forall v_i \in V^k_{h,i},
\end{aligned}
\end{equation*}
by the definition of $B^i_{h,D}\lp \cdot, \cdot \rp$ given in \eqref{eqn: Dirichlet Bilinear Form} and an application of Green's identity. We then have that
\begin{equation*}
	\left<\widetilde f_i, v_i - \RRR_{h,i} v_i \right>_{\Omega_{h,i}} = 
	\left<\widetilde f_i - \widehat f_i + L_i \widetilde u_{i}, v_i - \RRR_{h,i} v_i \right>_{\Omega_{h,i}},
\end{equation*}
by definition of $\widehat f_i$. It then follows that
\begin{equation*}
	B^i_{h,D}\lp z_{h,i}, v_i \rp - \theta_{h,i} \left< \iota_{h}^{(i)}, v_i \right>_{\Gamma^c_{h,i}} = 
	\left< \kappa_{h,i}, v_i - \RRR_{h,i} v_i \right>_{\Omega_{h,i}} \quad \forall v_i \in V^k_{h,i},
\end{equation*}
since by the definition of $u_{p,i}$, we have that
\begin{equation*}
	\left< \left. T^k_{h,i}  u_{p,i}\right|_{\etab^c_i\lp \xib^c_i \rp} - \widetilde u_i \circ \etab^c_i\lp \xib^c_i \rp, \mu_i\right>_{\Gamma^c_{h,i}}  = 0 \quad \forall \mu_i \in W^{c,k}_{h,i}
\end{equation*}
and
\begin{equation*}
	\left< \left. T^k_{h,i}  u_{p,i}\right|_{\etab^0_i\lp \xib^0_i \rp}, \mu_i\right>_{\Gamma^0_{h,i}}  = 0 \quad \forall \mu_i \in W^{0,k}_{h,i}.
\end{equation*}
Hence, the first two equations of \eqref{eqn: Discrete Error Equations} are derived. 

Next, we will derive the third equation of \eqref{eqn: Discrete Error Equations}. For notational convenience, we will denote
\begin{equation*}
\begin{aligned}
	F_i &:= \widetilde p_i \circ \etab^c_i \lp \xib^c_i\rp \lp\nabla \widetilde u_i\circ \etab^c_i \lp \xib^c_i \rp \rp\cdot \nn_i, \\
	F_{p,i} &:= \widetilde p_i \circ \etab^c_i \lp \xib^c_i \rp \left.\Tb^{k-1}_{h,i} \lp \nabla u_{p,i}\rp\right|_{\etab^c_i \lp \xib^c_i \rp} \cdot \nn_i,\\
	E^\prime_{p,i} &:= \widetilde p_i\lp \xib^c_i \rp \nabla e_{p,i} \cdot \nn_{h,i},
\end{aligned}
\end{equation*}
and
\begin{equation*}
	E_{p,i} := \widetilde p_i \circ \etab^c_i \lp \xib^c_i \rp \lp\left.\Tb^{k-1}_{h,i} \lp \nabla e_{p,i}\rp\right|_{\etab^c_i \lp \xib^c_i \rp} + \left.\Rb^{k-1}_{h,i} \lp \nabla \widetilde u_i \rp\right|_{\etab^c_i \lp \xib^c_i \rp}\rp \cdot \nn_i.
\end{equation*}
Then, from a similar set of steps used in the previous paragraph, we have that the last two equations of \eqref{eqn: MVI} can be written as
\begin{equation} \label{eqn: Separated Equations}
\begin{aligned}
\begin{aligned}
	&B^1_{h,N} \lp z_{h,1}, \RRR^c_{h,1} \mu_1 \rp + \left< \rho_h^{(1)} + F_{p,1}, \mu_1 \right>_{\Gamma^c_{h,1}}\\
	 &\quad= \left< \widetilde f_1 - \widehat f_1, \RRR^c_{h,1} \mu_1 \right>_{\Omega_{h,1}} + a_{h,1}\lp e_{p,1}, \RRR^c_{h,1} \mu_1 \rp - \left< E^\prime_{p,1}, \mu_1 \right>_{\Gamma_{h,1}}
\end{aligned}
\\
\begin{aligned}
	&B^2_{h,N} \lp z_{h,2}, \RRR^c_{h,2} \mu_2 \rp - \left< \rho_h - F_{p,2}, \mu_2 \right>_{\Gamma^c_{h,2}}\\
	 &\quad= \left< \widetilde f_2 - \widehat f_2, \RRR^c_{h,2} \mu_2 \right>_{\Omega_{h,2}} + a_{h,2}\lp e_{p,2}, \RRR^c_{h,2} \mu_2 \rp - \left< E^\prime_{p,2}, \mu_2 \right>_{\Gamma_{h,2}}
\end{aligned}
\\
\begin{aligned}
	\forall \lp\mu_1, \mu_2\rp \in \WWW^k_h,
\end{aligned}
\end{aligned}
\end{equation}
We now see that
\begin{equation} \label{eqn: Trace Duality Term}
\begin{aligned}
&\left< \rho_h^{(1)} + F_{p,1}, \mu_1 \right>_{\Gamma^c_{h,1}} - \left< \rho_h - F_{p,2}, \mu_2 \right>_{\Gamma^c_{h,2}} \\
&\quad = \left< \rho_h + F_{p,1}^{(2)}, \JJJ^c_{1,(2)}\mu_1^{(2)} \right>_{\Gamma^c_{h,2}} - \left< \rho_h - F_{p,2}, \mu_2 \right>_{\Gamma^c_{h,2}} \\
&\quad = \left< F_{p,1}^{(2)} + F_{p,2}, \JJJ^c_{1,(2)} \mu_1^{(2)} \right>_{\Gamma^c_{h,2}} + \left< \rho_h - F_{p,2}, \JJJ^c_{1,(2)}\mu_1^{(2)} - \mu_2 \right>_{\Gamma^c_{h,2}} \\
&\quad = -\left< E_{p,1}^{(2)} + E_{p,2}, \JJJ^c_{1, (2)} \mu_1^{(2)}\right>_{\Gamma^c_{h,2}} + \left< \rho_h - F_{p,2}, \JJJ^c_{1,(2)}\mu_1^{(2)} - \mu_2 \right>_{\Gamma^c_{h,2}}\\
&\quad = -\left< E_{p,1}, \mu_1 \right>_{\Gamma^c_{h,1}} - \left< E_{p,2}, \mu_2 \right>_{\Gamma^c_{h,2}} - \left< E_{p,2}+ \rho_h - F_{p,2}, \JJJ^c_{1, (2)} \mu_1^{(2)} - \mu_2 \right>_{\Gamma^c_{h,2}}.
\end{aligned}
\end{equation}
where we have added the productive zero in the third equality, i.e.,
\begin{equation*}
	-p_1\frac{\partial u_1}{\partial \nn_1} - p_2\frac{\partial u_2}{\nn_2}
	=-\sum_{i=1,2} F_i = 0.
\end{equation*}
Adding together the equations \eqref{eqn: Separated Equations}, and applying the change of variables formula on the trace duality term over $\Gamma^c_{h,1}$, substituting \eqref{eqn: Trace Duality Term}, and seeing that
\begin{equation*}
	\left< E_{p,i} - E^\prime_{p,i}, \mu_i \right>_{\Gamma^c_{h,i}} = \tau_i\lp e_{p,i}, \mu_i \rp
\end{equation*}
 yields the third equation in \eqref{eqn: Discrete Error Equations}.
%
%   REFERENCES
%
\nocite{*}
\bibliographystyle{plain}
\bibliography{bibliography.bib}

\begin{thebibliography}{10}

\bibitem{adams2003sobolev}
Robert~A Adams and John~JF Fournier.
\newblock {\em Sobolev spaces}, volume 140.
\newblock Academic press, 2003.

\bibitem{bazilevs2006isogeometric}
Yuri Bazilevs, Victor~M Calo, Yongjie Zhang, and Thomas~JR Hughes.
\newblock Isogeometric fluid--structure interaction analysis with applications
  to arterial blood flow.
\newblock {\em Computational Mechanics}, 38(4-5):310--322, 2006.

\bibitem{bernardi1993domain}
Christine Bernardi, Yvon Maday, and Anthony~T Patera.
\newblock Domain decomposition by the mortar element method.
\newblock In {\em Asymptotic and numerical methods for partial differential
  equations with critical parameters}, pages 269--286. Springer, 1993.

\bibitem{bfer1985isoparametric}
G~Bfer.
\newblock An isoparametric joint/interface element for finite element analysis.
\newblock {\em International journal for numerical methods in engineering},
  21(4):585--600, 1985.

\bibitem{PE-FEM}
Pavel Bochev, James Cheung, Mauro Perego, and Max Gunzburger.
\newblock Optimally accurate higher-order finite element methods on polytopial
  approximations of domains with smooth boundaries.
\newblock {\em arXiv preprint arXiv:1710.05628}, 2017.

\bibitem{brenner2007mathematical}
Susanne Brenner and Ridgway Scott.
\newblock {\em The mathematical theory of finite element methods}, volume~15.
\newblock Springer Science \& Business Media, 2007.

\bibitem{ciarlet2002finite}
Philippe~G Ciarlet.
\newblock {\em The finite element method for elliptic problems}.
\newblock SIAM, 2002.

\bibitem{ciarlet2013linear}
Philippe~G Ciarlet.
\newblock {\em Linear and nonlinear functional analysis with applications},
  volume 130.
\newblock Siam, 2013.

\bibitem{cockburn2014solving}
Bernardo Cockburn and Manuel Solano.
\newblock Solving convection-diffusion problems on curved domains by extensions
  from subdomains.
\newblock {\em Journal of Scientific Computing}, 59(2):512--543, 2014.

\bibitem{de2007review}
Aukje de~Boer, AH~Van~Zuijlen, and Hester Bijl.
\newblock Review of coupling methods for non-matching meshes.
\newblock {\em Computer methods in applied mechanics and engineering},
  196(8):1515--1525, 2007.

\bibitem{flemisch2005new}
B~Flemisch, MA~Puso, and BI~Wohlmuth.
\newblock A new dual mortar method for curved interfaces: 2d elasticity.
\newblock {\em International journal for numerical methods in engineering},
  63(6):813--832, 2005.

\bibitem{flemisch2005mortar}
Bernd Flemisch, Jens~Markus Melenk, and BI~Wohlmuth.
\newblock Mortar methods with curved interfaces.
\newblock {\em Applied Numerical Mathematics}, 54(3-4):339--361, 2005.

\bibitem{kuberry2017optimization}
Paul Kuberry, Pavel Bochev, and Kara Peterson.
\newblock An optimization-based approach for elliptic problems with interfaces.
\newblock {\em SIAM Journal on Scientific Computing}, 39(5):S757--S781, 2017.

\bibitem{qiu2016high}
Weifeng Qiu, Manuel Solano, and Patrick Vega.
\newblock A high order hdg method for curved-interface problems via
  approximations from straight triangulations.
\newblock {\em Journal of Scientific Computing}, 69(3):1384--1407, 2016.

\bibitem{quarteroni1999domain}
Alfio Quarteroni and Alberto Valli.
\newblock {\em Domain decomposition methods for partial differential
  equations}.
\newblock Number CMCS-BOOK-2009-019. Oxford University Press, 1999.

\bibitem{strang1973analysis}
Gilbert Strang and George~J Fix.
\newblock {\em An analysis of the finite element method}, volume 212.
\newblock Prentice-hall Englewood Cliffs, NJ, 1973.

\end{thebibliography}
\end{document}